\newtheorem{theorem}{Theorem}[section]
\newtheorem{proposition}[theorem]{Proposition}
\newtheorem{lemma}[theorem]{Lemma}
\theoremstyle{definition}
\newtheorem{conjecture}[theorem]{Conjecture}
\newcommand{\Fp}{\mathbb F_p}
\newcommand{\Dbc}{D^b_c}
\newcommand{\Ql}{\overline{\mathbb Q_\ell}}
\newcommand{\CC}{\mathbb C}
\newcommand{\LL}{\mathcal L}
\newcommand{\FF}{\mathcal F}
\newcommand{\HH}{\mathcal H}
\numberwithin{equation}{section}
\begin{document}
\title[On some conjectures on Generalized quadratic Gauss sums]
{On some conjectures on Generalized quadratic Gauss sums and related problems}
\author{Nilanjan Bag}
 \address{Department of Mathematics, Harish-Chandra Research Institute, HBNI, Chhatnag Road, Jhunsi, Prayagraj (Allahabad) - 211 019, India}
\email{nilanjanbag@hri.res.in}
\author{Antonio Rojas-Le\'{o}n}
 \address{Departament of Algebra, Universidad de Sevilla, c/Tarfia, s/n, 41012 Sevilla, Spain}
\email{arojas@us.es}
\author{Zhang Wenpeng}
 \address{School of Mathematics, Northwest University, Xi'an, 710127, Shaanxi, P. R. China}
\email{wpzhang@nwu.edu.cn}
\subjclass[2010]{11L05, 11L07.}
\date{24th May, 2021}
\keywords{generalized quadratic Gauss sums; Legendre symbol; asymptotic formula.}
\begin{abstract}
The main purpose of this article is to study higher power mean values of generalized quadratic Gauss sums using estimates for character sums, analytic method and algebraic geometric methods. In this article, we prove two conjectures which were proposed in \cite{BLZ}.
\end{abstract}
\maketitle
\section{Introduction and statements of the results}
Let $q\geq 2$ be an integer, and let $\chi$ be a Dirichlet character modulo $q$ and $n$ be any integer. For Dirichlet character $\chi\bmod q$, the generalized $k$-th Gauss sums $G(n,k,\chi;q)$ is defined as
\begin{equation}
 G(n,k,\chi;q)=\sum_{a=1}^q\chi(a)e\left(\frac{na^k}{q}\right),\notag
\end{equation}
where $e(y)=e^{2\pi iy}$. If $k=2$, then $G(n,2,\chi;q)$ is called the generalized quadratic Gauss sums and we denote it as $G(n,\chi;q)$. This sum generalizes the classical quadratic Gauss sum $G(n;q)$, which is defined as
\begin{equation}
 G(n;q)=\sum_{a=1}^{q}e\left(\frac{na^2}{q}\right).\notag
\end{equation}
This kind of character sum has been a very active object of study for a long time. The values of $G(n,\chi;q)$ behave irregularly whenever $\chi$ varies. For a positive integer $n$ with $\gcd(n,q)=1$, one can find a non-trivial upper bound of $|G(n,\chi;q)|$. For example, one can prove the upper bound estimate (see T. Cochrane, Z. Y. Zheng \cite{cochrane})
\begin{align*}
|G(n,\chi;q)|\leq 2^{\omega (q)}\sqrt{q},
\end{align*}
where $\omega(q)$ denotes the number of distinct prime divisors of $q$. 
In case of prime $p$, such bounds can be found in \cite{weil} and \cite{weil-2}. Let $p$ be an odd prime and $L(s,\chi)$ denote the Dirichlet $L$-function corresponding to the character $\chi \bmod p$. Let $\chi_0$ denote the principal character modulo $p$.
\par In \cite{zhang}, Zhang proposed two problems which are dedicated to finding asymptotic formulas for
 \begin{align*}
 \sum_{\chi\bmod p}|G(n,\chi;p)|^{2m} \text{ and }  \sum_{\chi \neq \chi_0}|G(n,\chi;p)|^{2m}|L(1,\chi)|
 \end{align*}
 for a general integer $m\geq 3$. Also in the same article he conjectured that,
\begin{conjecture}\label{C1}
For all positive integer $m$,
\begin{align*}
\sum_{\chi\neq\chi_0}|G(n,\chi;p)|^{2m}\cdot |L(1,\chi)|\sim C\sum_{\chi \bmod p}|G(n,\chi;p)|^{2m}, \qquad p\rightarrow+\infty,
\end{align*}
where
\begin{align}\label{constant-c}
C=\prod_p\left[1+\frac{\binom{2}{1}^2}{4^2.p^2}+\frac{\binom{4}{2}^2}{4^4.p^4}+\cdots+\frac{\binom{2m}{m}^2}{4^{2m}.p^{2m}}+\cdots\right]
\end{align}
is a constant and $\displaystyle \prod_p$ denotes the product over all primes.
\end{conjecture}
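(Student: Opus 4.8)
The plan is to show that the weight $|L(1,\chi)|$ \emph{decouples} from $|G(n,\chi;p)|^{2m}$ on average, the decoupling factor being the mean value of $|L(1,\chi)|$ over the nonprincipal characters, which I will show tends to $C$. First I would open the power mean by orthogonality: writing $A=a_1\cdots a_m$ and $B=b_1\cdots b_m$,
$$|G(n,\chi;p)|^{2m}=\sum_{\substack{a_1,\dots,a_m,\,b_1,\dots,b_m\\(a_ib_i,\,p)=1}}\chi(AB^{-1})\,e\!\left(\frac{n(a_1^2+\cdots+a_m^2-b_1^2-\cdots-b_m^2)}{p}\right).$$
Summing over $\chi$, the unweighted mean $S_0:=\sum_{\chi\bmod p}|G(n,\chi;p)|^{2m}$ collapses onto the diagonal $A\equiv B\pmod p$, whose leading contribution comes from the full diagonal $b_i=a_i$ (and its permutations) and gives $S_0\sim c_m\,p^{m+1}$; I would establish this asymptotic by bounding the incomplete exponential sums above with the Weil--Deligne estimates, which is precisely the algebraic-geometric input the paper relies on. The weighted sum then reduces, for each residue class $c\equiv AB^{-1}$, to evaluating the twisted average $\sum_{\chi\neq\chi_0}\chi(c)\,|L(1,\chi)|$.

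The heart of the argument is this twisted average. Using the Euler product together with the binomial expansion $(1-z)^{-1/2}=\sum_{k\ge0}\binom{2k}{k}4^{-k}z^k$, I would write, for $\chi\neq\chi_0$,
$$|L(1,\chi)|=\Big(\sum_{r}\frac{g(r)}{r}\chi(r)\Big)\Big(\sum_{s}\frac{g(s)}{s}\overline{\chi}(s)\Big),$$
where $g$ is the multiplicative function with $g(\ell^j)=\binom{2j}{j}4^{-j}$. This identity must be made rigorous by truncating $L(1,\chi)$ at a suitable length $X=X(p)$ and bounding the tail on average, since $\sum_r g(r)/r$ itself diverges. Orthogonality over the nonprincipal characters then yields, up to negligible corrections, $\frac1{p-1}\sum_{\chi\neq\chi_0}\chi(c)|L(1,\chi)|=\sum_{rs^{-1}\equiv c^{-1}}g(r)g(s)/(rs)$ with coprimality to $p$ understood.

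For $c\equiv1$ the diagonal $r=s$ produces $\sum_{r}g(r)^2/r^2=\prod_\ell\sum_{j\ge0}\big(\binom{2j}{j}4^{-j}\big)^2\ell^{-2j}$, which is exactly the factor defining $C$ in \eqref{constant-c}; letting $p\to\infty$ this convergent product tends to $C$. For $c\not\equiv1$, or for $r\neq s$ with $c\equiv1$, the congruence $s\equiv cr\pmod p$ forces (below the truncation) $|s-cr|\ge p$, and these terms are $o(1)$. Assembling the pieces, the weighted sum becomes $\sum_{\vec a,\vec b}e(\cdots)\big(\sum_{\chi\neq\chi_0}\chi(AB^{-1})|L(1,\chi)|\big)$: the classes $A\equiv B\pmod p$ reproduce $(p-1)\,C$ times the diagonal count, i.e. $C\cdot S_0$, while the classes $A\not\equiv B$ feed into the $c\not\equiv1$ average and, after inserting the Weil--Deligne bounds for the exponential-sum weights, contribute a strictly smaller power of $p$. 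Since the excluded term $|G(n,\chi_0;p)|^{2m}=O(p^m)$ is harmless, this gives $\sum_{\chi\neq\chi_0}|G(n,\chi;p)|^{2m}|L(1,\chi)|=C\,S_0\,(1+o(1))$, the claim.

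The main obstacle I anticipate is making these two approximations interact correctly and uniformly in $p$. Because $|L(1,\chi)|$ is a \emph{square root}, the expansion produces only the slowly decaying coefficients $g(\ell^j)=\binom{2j}{j}4^{-j}\asymp j^{-1/2}$ rather than a convergent Dirichlet series, so the truncation length $X(p)$ must be chosen to simultaneously keep the tail of $|L(1,\chi)|$ under control on average and keep the off-diagonal and $c\not\equiv1$ terms below the main order $p^{m+1}$ once they are convolved against the exponential-sum weights. Quantifying the cross-correlation between the arithmetic constraint $A\equiv B$ arising from $|G|^{2m}$ and the congruence $s\equiv cr$ arising from $|L|$, and ruling out a conspiracy that would manufacture a spurious secondary main term, is the delicate step, and it is exactly where the algebraic-geometric bounds on the resulting mixed character sums are indispensable.
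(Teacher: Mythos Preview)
The first thing to flag is that the paper does \emph{not} prove Conjecture~\ref{C1}. What the paper establishes is Theorem~\ref{mt2}, i.e.\ the asymptotic
\[
\sum_{\chi\bmod p}|G(n,\chi;p)|^{2m}=\binom{2m-1}{m}p^{m+1}+O(p^{m+1/2}),
\]
which is Conjecture~\ref{C2}. The $L$-weighted companion (Conjecture~\ref{C3}) is explicitly left open: the authors write that it ``seem[s] to be more difficult and [is] subject of our forthcoming work.'' Conjecture~\ref{C1} would follow from Conjectures~\ref{C2} and~\ref{C3} together, so only half of it is settled here. There is therefore no ``paper's own proof'' of this statement to compare against.

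Even restricting to the unweighted half, your sketch diverges from the paper's route. You propose to expand $|G|^{2m}$ directly as an exponential sum in $2m$ variables and appeal to Weil--Deligne bounds to isolate the diagonal. The paper instead uses Lemma~\ref{X2} to write $|G(n,\chi;p)|^{2}=Ap+B\sum_{a}\chi(a)\left(\tfrac{a^2-1}{p}\right)$, raises this to the $m$-th power, and reduces everything to the $r$-th moments of the single character sum $\sum_a\chi(a)\left(\tfrac{a^2-1}{p}\right)$; those moments are then computed via the Tannakian convolution formalism (Theorem~\ref{mt1}), identifying the relevant group as $\mathrm{O}_2(\mathbb C)$ and reading off the invariants of $V^{\otimes r}$. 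This is a structurally different argument from a raw Weil bound; in particular the main term is obtained exactly, not merely separated from an error.

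On the weighted half, your plan has the right shape, and your identification of $C$ as the diagonal $\sum_r g(r)^2/r^2$ with $g(\ell^j)=\binom{2j}{j}4^{-j}$ is correct. But the step you yourself single out as delicate is a genuine obstacle, not a technicality: writing $|L(1,\chi)|=\bigl(\sum_r g(r)\chi(r)/r\bigr)\bigl(\sum_s g(s)\overline\chi(s)/s\bigr)$ presumes one can take Euler-factor-wise square roots of $L(1,\chi)$ and $L(1,\overline\chi)$ and multiply the resulting (only conditionally meaningful) Dirichlet series. Since $g(\ell^j)\asymp j^{-1/2}$, neither series converges absolutely at $s=1$, and the truncation $X(p)$ must be long enough to approximate $|L(1,\chi)|$ yet short enough that the off-diagonal congruences $s\equiv cr\pmod p$ stay negligible after convolution with the $2m$-variable exponential weights. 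Making these two constraints compatible, uniformly in $\chi$ and $p$, is precisely the difficulty that keeps Conjecture~\ref{C3} open in this paper; your proposal names the problem but does not resolve it.
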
 Here $\displaystyle \sum_{\chi \bmod p}$ denotes the sum over all Dirichlet characters modulo $p$ and
 $\displaystyle \sum_{\chi \neq \chi_0}$ denotes the sum over all non-principal Dirichlet characters modulo $p$.
Zhang \cite{zhang} studied the hybrid power mean value involving the generalized Quadratic Gauss sums. He used estimates for character sums and analytic methods
to study second, fourth and sixth power mean values of generalized quadratic Gauss sums. To be specific, he proved that for any
integer $n$ with $\gcd(n,p)=1$
\begin{align*}
\sum_{\chi\bmod p}|G(n,\chi;p)|^4=\begin{cases}
(p-1)[3p^2-6p-1+4\left(\frac{n}{p}\right)\sqrt{p}], &\text{ if } p\equiv 1 \bmod 4;\\
(p-1)(3p^2-6p-1), &\text{ if } p\equiv 3 \bmod 4,
\end{cases}
\end{align*}
and
\begin{align*}
\sum_{\chi\bmod p}|G(n,\chi;p)|^6=(p-1)(10p^3-25p^2-4p-1), \text{ if } p\equiv 3 \bmod 4,
\end{align*}
where $\left(\frac{\bullet}{p}\right)$ is the Legendre symbol.
Later, He and Liao \cite{yuan} evaluated the $6$-th power mean value  when $p\equiv 1\bmod{4}$ and also in the same article they obtained the $8$-th power mean value of generalized quadratic Gauss sums for any odd prime $p$. To be specific, for any integer $n$ with $\gcd(n,p)=1$,
they \cite[Theorem 2 and 3]{yuan} proved that
\begin{align*}
\sum_{\chi\bmod p}|G(n,\chi;p)|^6=
\left\{
\begin{array}{ll}
(p-1)(10p^3-25p^2-16p-1)+(p\sqrt{p}(p-1)N\\
+18p^2\sqrt{p}-12p\sqrt{p}-6\sqrt{p})\left(\frac{n}{p}\right), & \hspace{-1.8cm} \hbox{if $p\equiv 1 \bmod 4$;} \\
(p-1)(10p^3-25p^2-4p-1), & \hspace{-1.8cm} \hbox{if $p\equiv 3 \bmod 4$,}
\end{array}
\right.
\end{align*}
where
\begin{align}\label{X1}
N=\sum_{a=2}^{p-2}\sum_{c=1}^{p-1}\left(\frac{a^2-c^2}{p}\right)\left(\frac{c^2-1}{p}\right)\left(\frac{a^2-1}{p}\right),
\end{align}
and
\begin{align*}
&\sum_{\chi\bmod p}|G(n,\chi;p)|^8\\
&=
\left\{
\begin{array}{ll}
(p-1)(34p^4-99p^3-65p^2-29p-1)\\+(56p^3\sqrt{p}+8p^2\sqrt{p}-56p\sqrt{p}-8\sqrt{p}+8p^2\sqrt{p}(p-1)N)\left(\frac{n}{p}\right)\\+p^2(p-1)T,
& \hspace{-1.8cm} \hbox{if $p\equiv 1 \bmod 4$;} \\
(p-1)(34p^4-99p^3+7p^2-5p-1)+p^2(p-1)T, & \hspace{-1.8cm} \hbox{if $p\equiv 3 \bmod 4$,}
\end{array}
\right.
\end{align*}
where $N$ is the same as \eqref{X1} and
\begin{align*}
T=\sum_{a=2}^{p-2}\sum_{b=1}^{p-1}\sum_{d=1}^{p-1}\left(\frac{a^2-b^2}{p}\right)\left(\frac{b^2-1}{p}\right)\left(\frac{a^2-d^2}{p}\right)\left(\frac{d^2-1}{p}\right).
\end{align*}
In article \cite{BB}, the first author and Barman derived asymptotic formulas for $T$ and $N$ which allowed them to get improved estimates for He and Liao's results. In particular, for odd prime $p$ and for any integer $n$ with $\gcd(n,p)=1$, they \cite[Theorem 1.7 and 1.8]{BB}
 proved that
\begin{align*}
\sum_{\chi \bmod p} \left|G(n,\chi;p)\right|^6
=\begin{cases}
 10p^4+O(p^{7/2}),&\text{if}~p\equiv 1\bmod 4;\\
 (p-1)(10p^3-25p^2-4p-1),&\text{if}~p\equiv 3\bmod 4,
 \end{cases}
\end{align*}
and
\begin{align*}
\sum_{\chi \bmod p} \left|G(n,\chi;p)\right|^8 =35p^5+O(p^{9/2}).
 \end{align*}
 The estimates for the $6$-th and $8$-th power mean values along with the results for $6$-th and $8$-th order moments of generalized quadratic Gauss sums weighted by $L$-functions proved Conjecture \ref{C1} upto $m\leq 4$.
Later we \cite{BLZ} derived an asymptotic formula for the $10$-th power mean value of the genralized quadratic Gauss sums and as an application we also derived an asymptotic formula for the $10$-th order moment of the generalized quadratic Gauss sums weighted by $L$-functions. We \cite[Theorem 1.4 and 1.5]{BLZ} proved, for any odd prime $p$ and any inetger $n$ with $\gcd (n,p)=1$
\begin{align*}
&\sum_{\chi\bmod p}|G(n,\chi;p)|^{10}=126\cdot p^6+O(p^{11/2})
\end{align*}
and
\begin{align*}
\sum_{\chi\neq\chi_0}\left|G(n,\chi;p)\right|^{10}\cdot |L(1,\chi)|=126\cdot C\cdot p^6+ O\left(p^{{11}/{2}}\cdot \ln^2 p\right),
\end{align*}
where $C$ is defined same as in (1.1). This proved Conjecture \ref{C1} upto $m\leq 5$.
\par In \cite{BLZ}, for any positive integer $m$, we also gave two conjectures for the general case.
\begin{conjecture}\cite[Conjecture 1.8]{BLZ}\label{C2} Let $p$ be a prime large enough, $m$ be any positive integer. Then we have the asymptotic formula
\begin{eqnarray*}
\frac{1}{p^{m+1}}\cdot \sum_{\chi \bmod p}\left|\sum_{a=1}^{p-1}\chi(a) e\left(\frac{na^2}{p}\right)\right|^{2m}=\binom{2m-1}{m} + o(1).
\end{eqnarray*}
\end{conjecture}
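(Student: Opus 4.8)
The plan is to reduce the full character sum to the even, non-principal characters, to replace $|G(n,\chi;p)|^{2}$ by a single explicit character sum, and then to expand the $m$-th power and evaluate the resulting moments by algebraic-geometric bounds.

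First I would observe that $G(n,\chi;p)=0$ whenever $\chi$ is odd (the contributions of $a$ and $-a$ cancel), and that the principal character contributes only $|G(n,\chi_0;p)|^{2m}=O(p^{m})$, negligible against the expected main term of size $p^{m+1}$. For even $\chi\neq\chi_0$ I would evaluate $|G(n,\chi;p)|^{2}$ directly: substituting $a=bc$ with $b$ invertible and summing the inner quadratic Gauss sum over $b$ yields the closed form
\begin{equation*}
|G(n,\chi;p)|^{2}=2p+\left(\tfrac{n}{p}\right)\tau(\chi_2)\,W(\chi),\qquad W(\chi)=\sum_{c=1}^{p-1}\chi(c)\left(\tfrac{c^{2}-1}{p}\right),
\end{equation*}
where $\chi_2=\left(\tfrac{\cdot}{p}\right)$ is the Legendre symbol and $\tau(\chi_2)=\sum_{a}e(a^{2}/p)$ is the quadratic Gauss sum, so that $\tau(\chi_2)^{2}=\left(\tfrac{-1}{p}\right)p$ and $|W(\chi)|\le 2\sqrt{p}$ by Weil's bound. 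This is the uniform analogue of the explicit sixth- and eighth-power formulas recorded above.

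Next I would raise the identity to the $m$-th power and sum over $\chi$. By the binomial theorem,
\begin{equation*}
\sum_{\chi\bmod p}|G(n,\chi;p)|^{2m}=\sum_{k=0}^{m}\binom{m}{k}(2p)^{m-k}\Bigl(\left(\tfrac{n}{p}\right)\tau(\chi_2)\Bigr)^{k}M_{k}+O(p^{m}),\qquad M_{k}=\sum_{\substack{\chi\ \mathrm{even}\\ \chi\neq\chi_{0}}}W(\chi)^{k}.
\end{equation*}
Expanding $W(\chi)^{k}$ and applying orthogonality over the even characters converts $M_{k}$ into a multivariable Legendre-symbol sum,
\begin{equation*}
M_{k}=\frac{p-1}{2}\,P_{k}+O(1),\qquad P_{k}=\sum_{\substack{1\le c_{1},\dots,c_{k}\le p-1\\ c_{1}\cdots c_{k}\equiv\pm1\ (p)}}\ \prod_{i=1}^{k}\left(\tfrac{c_{i}^{2}-1}{p}\right),
\end{equation*}
which generalises the Legendre-symbol sums (such as $N$ and $T$) appearing in the sixth- and eighth-power cases. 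The whole problem is thereby reduced to the asymptotics of $P_{k}$. The key point is that $P_{k}$ carries a main term only for even $k=2j$: on the ``reciprocal'' diagonals of the variety $c_{1}\cdots c_{k}\equiv\pm1$ the product $\prod_{i}\left(\tfrac{c_{i}^{2}-1}{p}\right)$ becomes constant (for instance $(c^{2}-1)(c^{-2}-1)=-c^{-2}(c^{2}-1)^{2}$ for a pair $c,c^{-1}$), which forces
\begin{equation*}
P_{2j}=\left(\tfrac{-1}{p}\right)^{j}\binom{2j}{j}p^{j}+O\!\left(p^{\,j-1/2}\right),\qquad P_{2j+1}=O\!\left(p^{\,j}\right).
\end{equation*}
Feeding this back, using $\tau(\chi_2)^{2j}=\left(\tfrac{-1}{p}\right)^{j}p^{j}$ together with the binomial identity
\begin{equation*}
\sum_{j\ge 0}\binom{m}{2j}\binom{2j}{j}2^{\,m-2j-1}=\binom{2m-1}{m},
\end{equation*}
collapses the leading term to exactly $\binom{2m-1}{m}p^{m+1}$, and division by $p^{m+1}$ gives the assertion.

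The main obstacle is the exact determination of the leading coefficient of the sums $P_{2j}$ and the vanishing to main order of $P_{2j+1}$: a naive count of the diagonal strata overcounts, so one must isolate the correct main term and prove square-root cancellation on every off-diagonal piece. I would carry this out with Deligne's bounds and a Katz-type cohomological computation of the relevant monodromy -- this is the natural home for the ``algebraic geometric methods'' of the title. Equivalently, the content is the vertical equidistribution, as $\chi$ ranges over the even characters, of the normalised cross term $\left(\tfrac{n}{p}\right)\tau(\chi_2)W(\chi)/(2p)=\cos\phi_{\chi}$ according to the arcsine law $d\mu=\tfrac{dx}{\pi\sqrt{1-x^{2}}}$ on $[-1,1]$, whose moments satisfy $2^{m-1}\int_{-1}^{1}(1+x)^{m}\,d\mu=\binom{2m-1}{m}$. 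Finally I would track uniformity: the implied constants in the Weil--Deligne estimates must be controlled uniformly for $j\le m$ (with $m$ fixed and $p\to\infty$), so that the total error stays $O(p^{m+1/2})$, in line with the earlier cases $m\le 5$, and normalisation by $p^{m+1}$ yields $\binom{2m-1}{m}+o(1)$.
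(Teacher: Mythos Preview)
Your proposal is correct and follows essentially the same route as the paper: the reduction to even non-principal characters, the identity $|G(n,\chi;p)|^{2}=2p+\left(\tfrac{n}{p}\right)\tau(\chi_2)W(\chi)$ (the paper's Lemma~3.1), the binomial expansion, the reduction of the moments $M_k$ to the Legendre-symbol sums $P_k$, and the final binomial identity are all exactly what the paper does. Your key asymptotic $P_{2j}=\left(\tfrac{-1}{p}\right)^{j}\binom{2j}{j}p^{j}+O(p^{j-1/2})$, $P_{2j+1}=O(p^{j})$ is precisely (a restatement of) the paper's Theorem~1.4, and the paper establishes it by the very ``Katz-type cohomological computation of the relevant monodromy'' you invoke: it shows, via Katz's Tannakian convolution framework on $\mathbb{G}_m$, that the arithmetic and geometric monodromy groups attached to the sheaf $\mathcal{F}=[t\mapsto t^2-1]^*\mathcal{L}_\rho$ are both $\mathrm{O}_2(\mathbb{C})$, from which the moment $\tfrac{1}{2}\binom{r}{r/2}$ drops out by integrating $\mathrm{Tr}(\sigma)^r$ over Haar measure---equivalent to your arcsine-law formulation after restricting to even $\chi$.
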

\begin{conjecture}\cite[Conjecture 1.9]{BLZ}\label{C3} Let $p$ be a prime large enough, $m$ be any positive integer. Then for any integer $n$ with $(n, p)=1$, we have the asymptotic formula
\begin{eqnarray*}
\frac{1}{p^{m+1}}\cdot\sum_{\chi\neq\chi_0}\left|\sum_{a=1}^{p-1}\chi(a) e\left(\frac{na^2}{p}\right)\right|^{2m}\cdot |L(1,\chi)|=\binom{2m-1}{m}\cdot C+ o\left(1\right),
\end{eqnarray*}
where $C$ is same as \eqref{constant-c}.
\end{conjecture}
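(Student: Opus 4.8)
The plan is to reduce everything to the algebra of the twisted moments of a single Jacobi-type sum and then to recognize the constant $C$ as the limiting average of $|L(1,\chi)|$. First I would restrict to even characters: pairing $a$ with $-a$ shows $G(n,\chi;p)=0$ whenever $\chi(-1)=-1$, so only the $(p-1)/2$ even characters contribute. For even $\chi\neq\chi_0$ I would use the elementary identity obtained by writing $a=bc$ and evaluating the resulting quadratic Gauss sum in $b$,
\[
|G(n,\chi;p)|^2 = 2p + \left(\frac np\right) G_p\, B(\chi), \qquad G_p=\sum_{b=0}^{p-1}e\!\left(\frac{b^2}{p}\right),\quad B(\chi)=\sum_{c=1}^{p-1}\chi(c)\left(\frac{c^2-1}{p}\right),
\]
where $|G_p|=\sqrt p$. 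Since $|G(n,\chi;p)|^2\le 4p$ by Weil, $B(\chi)$ has size $O(\sqrt p)$, so every term of the binomial expansion $|G(n,\chi;p)|^{2m}=\sum_{j=0}^m\binom mj(2p)^{m-j}((\frac np)G_p)^jB(\chi)^j$ contributes at the single scale $p^{m+1}$ after summation. This reduces the two statements to the twisted moments $\sum_{\chi\,\mathrm{even}}B(\chi)^j$ (for Conjecture \ref{C2}) and $\sum_{\chi\,\mathrm{even}}B(\chi)^j\,|L(1,\chi)|$ (for Conjecture \ref{C3}).

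The point-counting input, to be supplied by the algebraic-geometric part, evaluates $\sum_{\chi\,\mathrm{even}}B(\chi)^j$ asymptotically; assembling these through the binomial expansion is precisely what produces the factor $\binom{2m-1}{m}=\tfrac12\binom{2m}{m}$ of Conjecture \ref{C2}, the extra $\tfrac12$ reflecting the restriction to even characters. For Conjecture \ref{C3} the key observation is that the constant $C$ in \eqref{constant-c} is exactly the limiting average of $|L(1,\chi)|$: expanding $B(\chi)^j=\sum_{c_1,\dots,c_j}\chi(c_1\cdots c_j)\prod_i(\frac{c_i^2-1}{p})$ and summing reduces everything to the twisted first moments $\sum_{\chi\,\mathrm{even}}\chi(t)|L(1,\chi)|$. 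For $t\equiv\pm1\bmod p$ one has $\chi(t)=1$ on even characters, and the joint equidistribution of $(\chi(\ell))_\ell$ on products of unit circles gives
\[
\frac{2}{p-1}\sum_{\chi\,\mathrm{even}}|L(1,\chi)|\longrightarrow \prod_\ell\int_0^{2\pi}\frac{d\phi/2\pi}{|1-e^{i\phi}/\ell|}=\prod_\ell\sum_{k\ge0}\frac{\binom{2k}{k}^2}{4^{2k}\ell^{2k}}=C,
\]
the inner integral being evaluated by the Legendre-polynomial generating function; this matches \eqref{constant-c} on the nose.

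It then remains to prove the decoupling $\sum_{\chi\,\mathrm{even}}B(\chi)^j|L(1,\chi)|\sim C\sum_{\chi\,\mathrm{even}}B(\chi)^j$, that is, that for $t\not\equiv\pm1$ the twisted average $\sum_{\chi\,\mathrm{even}}\chi(t)|L(1,\chi)|$ is of strictly smaller order: the oscillation of $\chi(t)$ must overcome $|L(1,\chi)|$, which carries no correlated phase since it is governed by $\chi$ at small primes whereas $B(\chi)$ is a global sum. I would establish this by truncating $L(1,\chi)=\sum_{k\le p}\chi(k)/k+O(p^{-1/2}\log p)$ via Pólya–Vinogradov and then estimating the first power of the absolute value either through its logarithmic Euler-product expansion, so that $\chi(t)$ attaches to disjoint local factors and forces cancellation, or through the second-moment bound $\sum_t\bigl|\sum_\chi\chi(t)|L(1,\chi)|\bigr|^2=(p-1)\sum_\chi|L(1,\chi)|^2\asymp p^2$ combined with the Weil-type cancellation available in the $(c_1,\dots,c_j)$ summation. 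This treatment of the first power of $|L(1,\chi)|$, where the absolute value obstructs naive orthogonality, is the main obstacle and is the source of the logarithmic losses already visible in the error $O(p^{11/2}\log^2 p)$ for $m\le5$. Once the decoupling is in place, substituting back into the binomial expansion and combining with the evaluation underlying Conjecture \ref{C2} yields $\frac1{p^{m+1}}\sum_{\chi\neq\chi_0}|G(n,\chi;p)|^{2m}|L(1,\chi)|=\binom{2m-1}{m}\,C+o(1)$, as claimed.
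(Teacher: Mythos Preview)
The paper does \emph{not} prove Conjecture~\ref{C3}. Immediately after stating Theorem~\ref{mt3} the authors write that ``Conjecture~\ref{C3} and Conjecture~\ref{C5} seem to be more difficult and are subject of our forthcoming work.'' So there is no proof in the paper to compare your proposal against; what you are attempting is precisely the part the authors leave open.

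Your reduction is the right one and matches how the cases $m\le 5$ were handled in the cited literature: restrict to even characters, use Lemma~\ref{X2} to write $|G(n,\chi;p)|^2=2p+\left(\frac{n}{p}\right)G(1;p)\,B(\chi)$, expand binomially, and feed in Theorem~\ref{mt1} for the unweighted moments of $B(\chi)$. Your identification of $C$ as the limiting average of $|L(1,\chi)|$ over even characters is also correct. The genuine gap is the decoupling step, and your two proposed routes do not close it as stated. The second-moment/Cauchy--Schwarz approach gives at best
\[
\Bigl|\sum_{t\not\equiv\pm1}c_j(t)\,M(t)\Bigr|\le\Bigl(\sum_t|c_j(t)|^2\Bigr)^{1/2}\Bigl(\sum_{t\not\equiv\pm1}|M(t)|^2\Bigr)^{1/2},
\]
where $c_j(t)=\sum_{\prod c_i=t}\prod_i\left(\frac{c_i^2-1}{p}\right)$ and $M(t)=\sum_{\chi\text{ even}}\chi(t)|L(1,\chi)|$. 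By Parseval and Theorem~\ref{mt1} one has $\sum_t|c_j(t)|^2\asymp p^{j}$, while $\sum_{t\not\equiv\pm1}|M(t)|^2\asymp p^2$ (the variance of $|L(1,\chi)|$ is bounded away from zero), so the bound is $p^{j/2+1}$ --- exactly the size of the main term $c_j(\pm1)M(\pm1)$, not smaller. The vague appeal to ``Weil-type cancellation in the $(c_1,\dots,c_j)$ summation'' does not explain where an extra saving comes from, since that cancellation is already what makes $\sum_t|c_j(t)|^2\asymp p^j$ rather than $p^{2j-1}$. The alternative ``logarithmic Euler-product'' route is not fleshed out either: expanding $\log|L(1,\chi)|$ is tractable, but you need $|L(1,\chi)|$ itself, and exponentiating reintroduces the same obstruction. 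In short, you have correctly isolated the obstacle the paper itself declines to tackle, but have not overcome it.
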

 
 In this article, we first prove the following asymptotic formula:
\begin{theorem}\label{mt1}
Let $p$ be a odd prime large enough. Then for any positive integer $r$ we have 
\begin{align*}
\sum_{\chi\bmod p}\left(\sum_{a=1}^{p-1}\chi(a) \left(\frac{a^2-1}{p}\right)\right)^{r}=A_r\cdot p^{(r+2)/2} + O(p^{(r+1)/2}).
\end{align*}
Here, whenever $p\equiv 1\bmod 4$
\begin{align*}
A_r=\begin{cases}
\frac{1}{2}\cdot \binom{r}{r/2}, ~~&\text{if}~~ r ~\text{is even;}\\
0, ~~&\text{if}~~ r ~\text{is odd}
\end{cases}
\end{align*}
and whenever $p\equiv 3 \bmod 4$
\begin{align*}
A_r=\begin{cases}
(-1)^{r/2}\cdot \frac{1}{2}\cdot \binom{r}{r/2}, ~~&\text{if}~~ r ~\text{is even;}\\
0, ~~&\text{if}~~ r ~\text{is odd}.
\end{cases}
\end{align*}
\end{theorem}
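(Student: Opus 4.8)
The plan is to linearise by orthogonality, restrict to even characters and to the locus of squares, and then peel off one variable at a time using the classical evaluation of a one–variable quadratic character sum; the binomial main term will come from the subsets of the index set of size $r/2$.

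Write $\eta=\left(\frac{\cdot}{p}\right)$ and $S(\chi)=\sum_{a=1}^{p-1}\chi(a)\,\eta(a^2-1)$. The substitution $a\mapsto -a$ gives $S(\chi)=\chi(-1)S(\chi)$, so $S(\chi)=0$ for every odd $\chi$. For even $\chi$, grouping $a$ with $-a$ and setting $u=a^2$ yields $S(\chi)=2\sum_{u\in Q}\tilde\chi(u)\,\eta(u-1)$, where $Q\subset\Fp^\ast$ is the group of nonzero squares and $\tilde\chi$ is the character induced by $\chi$ on $Q$; as $\chi$ runs over even characters, $\tilde\chi$ runs over $\hat Q$. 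Hence
$$\sum_{\chi\bmod p}S(\chi)^r=2^{\,r}\sum_{\lambda\in\hat Q}\Big(\sum_{u\in Q}\lambda(u)\eta(u-1)\Big)^{r}.$$
Expanding and using orthogonality $\sum_{\lambda\in\hat Q}\lambda(m)=|Q|\,\mathbf 1_{m=1}$ collapses the outer sum onto the diagonal $u_1\cdots u_r=1$, giving $\sum_{\chi}S(\chi)^r=2^{\,r-1}(p-1)W_r$ with
$$W_r=\sum_{\substack{u_1,\dots,u_r\in Q\\ u_1\cdots u_r=1}}\ \prod_{i=1}^r\eta(u_i-1).$$
Finally, writing $\mathbf 1_{u\in Q}=\frac12\bigl(1+\eta(u)\bigr)$ expands this as $W_r=2^{-r}\sum_{I\subseteq\{1,\dots,r\}}\Sigma(I)$, where
$$\Sigma(I)=\sum_{u_1\cdots u_r=1}\ \prod_{i\in I}\eta(u_i)\prod_{i=1}^r\eta(u_i-1)$$
now ranges over the full torus in $\Fp^\ast$.

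The crux is to evaluate each $\Sigma(I)$. I would eliminate $u_r=(u_1\cdots u_{r-1})^{-1}$ and sum over the remaining variables one at a time, each step being a sum of the form $\sum_{u\in\Fp}\eta(\alpha u^2+\beta u+\gamma)$, which equals $-\eta(\alpha)$ when the discriminant $\beta^2-4\alpha\gamma$ is nonzero and $(p-1)\eta(\alpha)$ when it vanishes. At each stage the discriminant vanishes exactly on a ``collision'' locus, where one gains a factor $p$ together with a factor $\eta(-1)$ instead of square–root cancellation. Tracking this branching shows that the full main term of size $p^{r/2}$ is produced precisely by the subsets $I$ with $|I|=r/2$, each contributing $(\eta(-1))^{r/2}\,p^{r/2}+O(p^{(r-1)/2})$, while every other $\Sigma(I)$ is $O(p^{(r-1)/2})$. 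Since there are $\binom{r}{r/2}$ such subsets,
$$\sum_{\chi\bmod p}S(\chi)^r=\frac12\binom{r}{r/2}(\eta(-1))^{r/2}\,p^{(r+2)/2}+O\bigl(p^{(r+1)/2}\bigr).$$
As $\eta(-1)=(-1)^{(p-1)/2}$, the factor $(\eta(-1))^{r/2}$ equals $1$ for $p\equiv1\bmod4$ and $(-1)^{r/2}$ for $p\equiv3\bmod4$, which reproduces $A_r$; when $r$ is odd there is no subset of size $r/2$, no collision resonance survives, and the whole sum is $O(p^{(r+1)/2})$, i.e. $A_r=0$.

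The main obstacle is making the branching analysis of the third step rigorous and uniform in $p$. One must prove that the degenerate branches of the iterated summation are in bijection with the index subsets of size $r/2$, that each yields the asserted leading coefficient with the correct sign, and that no configuration produces a resonance larger than $p^{r/2}$. Equally important, the surviving non-degenerate sums are character sums over open subvarieties of tori attached to nontrivial Kummer sheaves, and bounding them by $O(p^{(r-1)/2})$ is where the algebraic–geometric input enters, via the Weil and Deligne estimates together with a bound on the conductors (which is uniform for fixed $r$). I would also have to discard the boundary contributions from $u_i\in\{0,1\}$ and from coincidences among the various discriminant loci; these are supported on lower–dimensional strata and feed only into the error term. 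Consolidating all of this into a single $O(p^{(r+1)/2})$ error is the technical heart of the argument.
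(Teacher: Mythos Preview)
Your initial reductions are correct: the passage to even characters, to the group $Q$ of squares, and the expansion $\mathbf 1_{u\in Q}=\tfrac12(1+\eta(u))$ cleanly yield $\sum_\chi S(\chi)^r=\tfrac{p-1}{2}\sum_I\Sigma(I)$ with $\Sigma(I)$ depending only on $k=|I|$. Your identification of the main term is also correct: writing $\hat f_0(\chi)=\sum_u\bar\chi(u)\eta(u-1)=\eta(-1)J(\bar\chi,\eta)$, one checks that $\Sigma_k=\tfrac{\eta(-1)^{k+r}}{p-1}\sum_\chi J(\chi,\eta)^k\,\overline{J(\chi,\eta)}^{\,r-k}$, so for $k=r/2$ the summand is $|J|^{r}=p^{r/2}$ for all but two characters, giving $\Sigma_{r/2}=\eta(-1)^{r/2}p^{r/2}+O(p^{r/2-1})$ exactly as you assert.

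The genuine gap is in your treatment of the remaining $\Sigma_k$, and it is twofold. First, the claim that each inner sum is of the form $\sum_u\eta(\alpha u^2+\beta u+\gamma)$ is false: after eliminating $u_r$ the factor coming from index $r$ is $\eta(-1)f_{1-[r\in I]}(u_1\cdots u_{r-1})$, and the product of this with the factor from index $r-1$ is $\eta$ of a \emph{cubic} in $u_{r-1}$ whenever exactly one of $r-1,r$ lies in $I$. So the iteration immediately produces elliptic-curve traces rather than the $\pm 1$/$(p-1)$ dichotomy you describe, and the ``collision'' bookkeeping no longer matches subsets of size $r/2$. Second, even where the step is quadratic, the naive iteration is too weak: bounding the inner sum by $O(\sqrt p)$ and summing trivially over the remaining $r-2$ variables gives only $O(p^{r-3/2})$, not $O(p^{(r-1)/2})$. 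Already for $r=3$, $k=0$ one finds $\Sigma_0=\eta(-1)\sum_{u_1}\eta(u_1(u_1-1))\cdot a_p(E_{u_1})$ with $E_{u_1}:y^2=x(x-1)(u_1x-1)$, and the trivial bound $|a_p|\le 2\sqrt p$ yields only $O(p^{3/2})$ where $O(p)$ is required.

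What you actually need, in the Jacobi-sum language above, is $\sum_\chi J(\chi,\eta)^k\,\overline{J(\chi,\eta)}^{\,r-k}=O(p^{(r+1)/2})$ for $k\neq r/2$, i.e.\ the Weyl-sum bound $\sum_\chi e^{i(2k-r)\theta_\chi}=O(\sqrt p)$ for the angles $\theta_\chi=\arg J(\chi,\eta)$. This is exactly the equidistribution statement whose proof is the content of the paper's argument: one interprets the inner sum as the Frobenius trace on the stalk at $1$ of the $r$-fold multiplicative convolution $\mathcal F^{\ast r}$ for $\mathcal F=\mathcal L_\rho(t^2-1)$, shows via Katz's Tannakian formalism that the associated group is $G_{\mathrm{arith}}=G_{\mathrm{geom}}=\mathrm O_2(\mathbb C)$, and then reads off both the main term $\tfrac12\binom{r}{r/2}$ (as $\dim(V^{\otimes r})^{\mathrm O_2}$, computed by $\tfrac1{4\pi}\int_0^{2\pi}(2\cos t)^r\,dt$) and the $O(p^{(r-1)/2})$ error (from Deligne's weight bound on $\mathcal H^{r-1}$). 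Your decomposition is thus a valid and pleasant repackaging, but the step you flag as ``the main obstacle'' is not a technicality to be cleaned up---it is precisely the theorem, and the branching mechanism you propose does not supply it.
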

With the help of Theorem \ref{mt1}, we prove conjecture \ref{C2}. In particular, we prove the following:
\begin{theorem}\label{mt2}
 Let $p$ be a prime large enough. Then for any positive integer $m$, we have the asymptotic formula
\begin{eqnarray*}
\sum_{\chi\bmod p}\left|\sum_{a=1}^{p-1}\chi(a) e\left(\frac{na^2}{p}\right)\right|^{2m}=\binom{2m-1}{m}\cdot p^{m+1} + O\left(p^{m+1/2}\right).
\end{eqnarray*}
\end{theorem}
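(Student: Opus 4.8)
The plan is to reduce everything to the character sum $H(\chi):=\sum_{a=1}^{p-1}\chi(a)\left(\frac{a^2-1}{p}\right)$ governed by Theorem~\ref{mt1}, by first linearizing $|G(n,\chi;p)|^2$. Expanding the square of $G(n,\chi;p)=\sum_{a=1}^{p-1}\chi(a)e(na^2/p)$ and substituting $a=bc$ with $b,c$ running over $(\mathbb Z/p)^\times$ gives
\[
|G(n,\chi;p)|^2=\sum_{c=1}^{p-1}\chi(c)\sum_{b=1}^{p-1}e\!\left(\frac{nb^2(c^2-1)}{p}\right).
\]
The inner sum equals $p-1$ when $c\equiv\pm1$ and $\left(\frac{n(c^2-1)}{p}\right)g-1$ otherwise, where $g=\sum_{b=0}^{p-1}e(b^2/p)$ is the classical quadratic Gauss sum with $g=\sqrt p$ for $p\equiv1\bmod4$ and $g=i\sqrt p$ for $p\equiv3\bmod4$. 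Using $\sum_c\chi(c)=0$ for non-principal $\chi$ and the vanishing of the Legendre factor at $c=\pm1$, this collapses to the clean identity
\[
|G(n,\chi;p)|^2=p\,(1+\chi(-1))+g\left(\frac{n}{p}\right)H(\chi),
\]
valid for non-principal $\chi$. The substitution $a\mapsto -a$ shows $H(\chi)=\chi(-1)H(\chi)$, so $H(\chi)=0$ (and indeed $G(n,\chi;p)=0$) for odd $\chi$; only even characters contribute.

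Next I would raise this to the $m$-th power. For even non-principal $\chi$ one has $1+\chi(-1)=2$, so the binomial theorem gives $|G(n,\chi;p)|^{2m}=\sum_{j=0}^m\binom{m}{j}(2p)^{m-j}\big(g(\tfrac np)\big)^j H(\chi)^j$; odd characters contribute nothing and the single principal character contributes $|G(n,\chi_0;p)|^{2m}=O(p^m)$. Summing over $\chi$ and using $H(\chi)=0$ on odd characters, the inner sums become $\sum_{\chi\bmod p}H(\chi)^j$, up to the negligible correction $H(\chi_0)^j=O(1)$, which Theorem~\ref{mt1} evaluates as $A_j\,p^{(j+2)/2}+O(p^{(j+1)/2})$. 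Since the prefactor $\binom{m}{j}(2p)^{m-j}(g(\tfrac np))^j$ has modulus $O(p^{m-j/2})$, every error term and every odd-$j$ term (where $A_j=0$) feeds in at the uniform order $O(p^{m+1/2})$, so only even $j$ survive in the main term.

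For even $j$ I would check that the two congruence classes collapse to the same contribution: when $p\equiv1\bmod4$, $g^j=p^{j/2}$ and $A_j=\tfrac12\binom{j}{j/2}$, while when $p\equiv3\bmod4$, $g^j=i^j p^{j/2}=(-1)^{j/2}p^{j/2}$ and $A_j=(-1)^{j/2}\tfrac12\binom{j}{j/2}$, so the two signs cancel; moreover $(\tfrac np)^j=1$ for even $j$, removing the dependence on $n$. In both cases the $j$-th main term is $\binom{m}{j}2^{m-j}\cdot\tfrac12\binom{j}{j/2}\cdot p^{m+1}$, so the coefficient of $p^{m+1}$ is $\tfrac12\sum_{j\ \mathrm{even}}\binom{m}{j}2^{m-j}\binom{j}{j/2}$.

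The last step, which I expect to be the real heart of the argument, is the combinatorial identity
\[
\sum_{\substack{0\le j\le m\\ j\ \mathrm{even}}}\binom{m}{j}2^{m-j}\binom{j}{j/2}=\binom{2m}{m}=2\binom{2m-1}{m}.
\]
I would prove it by extracting the constant term of $(x+x^{-1})^{2m}=(x^2+2+x^{-2})^m$ in two ways: directly it equals $\binom{2m}{m}$, while expanding $(x^2+2+x^{-2})^m$ multinomially and keeping the balanced contributions ($a$ factors giving $x^2$, $a$ giving $x^{-2}$, and $m-2a$ giving $2$) yields $\sum_a\binom{m}{2a}\binom{2a}{a}2^{m-2a}$, which is the left-hand side upon setting $j=2a$. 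Feeding this back gives the coefficient $\tfrac12\binom{2m}{m}=\binom{2m-1}{m}$, so that $\sum_{\chi\bmod p}|G(n,\chi;p)|^{2m}=\binom{2m-1}{m}p^{m+1}+O(p^{m+1/2})$, completing the proof.
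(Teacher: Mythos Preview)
Your proof is correct and follows essentially the same route as the paper: you derive the identity $|G(n,\chi;p)|^2=p(1+\chi(-1))+g\bigl(\tfrac{n}{p}\bigr)H(\chi)$ for non-principal $\chi$ (which the paper quotes as Lemma~\ref{X2}), expand the $m$-th power by the binomial theorem, feed in Theorem~\ref{mt1}, and finish with the identity $\sum_{j\text{ even}}\binom{m}{j}2^{m-j}\binom{j}{j/2}=\binom{2m}{m}$. The only cosmetic difference is that the paper proves this last identity by comparing the coefficient of $x^m$ in $(1+x)^{2m}=(1+2x+x^2)^m$, whereas you extract the constant term of $(x+x^{-1})^{2m}=(x^2+2+x^{-2})^m$; these are the same computation up to a harmless substitution.
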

%
 In \cite{BLZ}, we also gave two corresponding conjectures.
 \begin{conjecture} \cite[Conjecture 1.10]{BLZ}\label{C4} Let $p$ be a prime large enough. Then for any positive integer $m$, we have the asymptotic formula
 \begin{align*}
 \frac{1}{p^{m+1}}\cdot \sum_{\substack{\chi \bmod p\\ \chi\neq\chi_0}}\left|\sum_{a=1}^{p-1}\chi(a+\overline{a})\right|^{2m}=\binom{2m-1}{m}+ o\left(1\right).
 \end{align*}
 \end{conjecture}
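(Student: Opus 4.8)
The plan is to deduce Conjecture \ref{C4} from Theorem \ref{mt1} by a change of variables, since the inner sum $\sum_{a=1}^{p-1}\chi(a+\overline a)$ is, up to modulus, exactly the sum treated there. First I would rewrite the inner sum for a non-principal $\chi$. Setting $t=a+\overline a$, the equation $a+\overline a=t$ is equivalent to $a^2-ta+1=0$, which has precisely $1+\left(\frac{t^2-4}{p}\right)$ solutions $a\in(\mathbb Z/p)^\ast$ (note $a=0$ is never a root). Using $\chi(0)=0$ together with $\sum_{t}\chi(t)=0$ for $\chi\neq\chi_0$, this gives
\begin{align*}
\sum_{a=1}^{p-1}\chi(a+\overline a)=\sum_{t=0}^{p-1}\chi(t)\left(1+\left(\frac{t^2-4}{p}\right)\right)=\sum_{t=0}^{p-1}\chi(t)\left(\frac{t^2-4}{p}\right).
\end{align*}
After the substitution $t=2s$ and using $\left(\frac{4}{p}\right)=1$, this becomes $\chi(2)\,f(\chi)$ with $f(\chi):=\sum_{s=1}^{p-1}\chi(s)\left(\frac{s^2-1}{p}\right)$, the very inner sum of Theorem \ref{mt1}. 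As $|\chi(2)|=1$, we obtain $\left|\sum_{a=1}^{p-1}\chi(a+\overline a)\right|^{2m}=|f(\chi)|^{2m}$ for every $\chi\neq\chi_0$.

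Next I would convert the modulus into a plain power so that Theorem \ref{mt1} becomes directly applicable. Replacing $s$ by $s^{-1}$ and using $\left(\frac{s^{-2}-1}{p}\right)=\left(\frac{-1}{p}\right)\left(\frac{s^2-1}{p}\right)$ yields the symmetry $\overline{f(\chi)}=f(\overline\chi)=\left(\frac{-1}{p}\right)f(\chi)$, whence $|f(\chi)|^2=\left(\frac{-1}{p}\right)f(\chi)^2$ and therefore $|f(\chi)|^{2m}=\left(\frac{-1}{p}\right)^m f(\chi)^{2m}$. Summing over $\chi\neq\chi_0$ and restoring the principal term (for which $f(\chi_0)=-1-\left(\frac{-1}{p}\right)=O(1)$, hence negligible after raising to the power $2m$), Theorem \ref{mt1} with $r=2m$ gives
\begin{align*}
\sum_{\chi\neq\chi_0}\left|\sum_{a=1}^{p-1}\chi(a+\overline a)\right|^{2m}=\left(\frac{-1}{p}\right)^m\left(A_{2m}\,p^{m+1}+O\!\left(p^{m+1/2}\right)\right).
\end{align*}

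The two residue classes now reconcile: for $p\equiv1\bmod4$ one has $\left(\frac{-1}{p}\right)^m=1$ and $A_{2m}=\tfrac12\binom{2m}{m}$, while for $p\equiv3\bmod4$ the sign $(-1)^m$ inside $A_{2m}=(-1)^m\tfrac12\binom{2m}{m}$ is cancelled by $\left(\frac{-1}{p}\right)^m=(-1)^m$. In both cases the leading coefficient is $\tfrac12\binom{2m}{m}=\binom{2m-1}{m}$, so dividing by $p^{m+1}$ produces $\binom{2m-1}{m}+O(p^{-1/2})=\binom{2m-1}{m}+o(1)$, which is Conjecture \ref{C4}. Given Theorem \ref{mt1}, there is no genuine analytic obstacle here; the entire content is the two algebraic reductions above, and the only delicate point is tracking the factor $\left(\frac{-1}{p}\right)$ through the symmetry relation and verifying that it exactly neutralizes the sign in $A_{2m}$, so that the final constant is independent of $p\bmod4$.
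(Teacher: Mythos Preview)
Your argument is correct, and it is in fact more direct than the route taken in the paper. Both proofs rest on Theorem~\ref{mt1}; the difference lies in how one passes from the absolute value $\bigl|\sum_s\chi(s)\left(\frac{s^2-1}{p}\right)\bigr|^{2m}$ to the plain power $\bigl(\sum_s\chi(s)\left(\frac{s^2-1}{p}\right)\bigr)^{2m}$ that Theorem~\ref{mt1} controls. The paper goes through Theorem~\ref{mt2}: it invokes Lemma~\ref{X2} to write $|f(\chi)|^2=\frac{1}{|G(1;p)|^2}\bigl(|G(1,\chi;p)|^2-2p\bigr)$ for even $\chi$, expands the $2m$-th power binomially, applies Theorem~\ref{mt2} term by term, and then evaluates the resulting sum $\sum_{k=0}^{2m}(-1)^k2^{2m-k}\binom{2m}{k}\binom{2k}{k}$ via a generating-function identity. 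Theorem~\ref{mt2} in turn was proved from Theorem~\ref{mt1} by a similar binomial expansion and a second combinatorial identity.

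You bypass all of this by exploiting the symmetry $\overline{f(\chi)}=\left(\frac{-1}{p}\right)f(\chi)$, i.e.\ the observation that $f(\chi)$ is real for $p\equiv1\bmod4$ and purely imaginary for $p\equiv3\bmod4$. This converts $|f(\chi)|^{2m}$ into $\left(\frac{-1}{p}\right)^m f(\chi)^{2m}$ \emph{pointwise}, so that a single application of Theorem~\ref{mt1} with $r=2m$ suffices and the sign $\left(\frac{-1}{p}\right)^m$ cancels exactly against the $(-1)^{m}$ appearing in $A_{2m}$ when $p\equiv3\bmod4$. What the paper's longer route buys is that Theorem~\ref{mt2} (the $2m$-th moment of the generalized quadratic Gauss sum) is isolated as a result of independent interest; for the purpose of establishing Conjecture~\ref{C4} alone, however, your argument is shorter and avoids both binomial identities.
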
 
  \begin{conjecture}\cite[Conjecture 1.11]{BLZ}\label{C5}
 Let $p$ be a prime large enough. Then for any positive integer $m$, we have the asymptotic formula
 \begin{align*}
 \frac{1}{p^{m+1}}\cdot \sum_{\substack{\chi \bmod p\\ \chi\neq\chi_0}}\left|\sum_{a=1}^{p-1}\chi(a+\overline{a})\right|^{2m}\cdot |L(1,\chi)|=\binom{2m-1}{m}\cdot C+ o\left(1\right),
 \end{align*}
 where $C$ is same as \eqref{constant-c}.
 \end{conjecture}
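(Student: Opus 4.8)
The plan is to reduce the Kloosterman-type weight to the Legendre sum $S(\chi):=\sum_{a=1}^{p-1}\chi(a)\left(\frac{a^2-1}{p}\right)$, whose moments are governed by Theorem \ref{mt1}, then to settle the unweighted Conjecture \ref{C4} first, and finally to bring in the factor $|L(1,\chi)|$ through its Euler product, where the constant $C$ of \eqref{constant-c} appears as a diagonal main term. First I would carry out the reduction $\left|\sum_{a=1}^{p-1}\chi(a+\overline a)\right|=|S(\chi)|$. Writing $s=a+\overline a$, the fibre count is $\#\{a\in\Fp^{\times}:a+\overline a=s\}=1+\left(\frac{s^2-4}{p}\right)$ for $s\neq\pm2$ and $1$ for $s=\pm2$ (the roots of $a^2-sa+1=0$ have product $1$, hence are automatically nonzero). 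Thus for $\chi\neq\chi_0$ the constant part vanishes by orthogonality while the exceptional values $s=\pm2$ cancel, leaving $\sum_{a}\chi(a+\overline a)=\sum_{s}\chi(s)\left(\frac{s^2-4}{p}\right)$; the substitution $s=2u$ with $\left(\frac4p\right)=1$ gives $\sum_{a}\chi(a+\overline a)=\chi(2)\,S(\chi)$, so $\left|\sum_{a}\chi(a+\overline a)\right|=|S(\chi)|$ for every $\chi$.

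Next, the substitution $a\mapsto\overline a$ together with $a^{-2}-1=-a^{-2}(a^2-1)$ yields the functional equation $S(\chi)=\left(\frac{-1}{p}\right)\overline{S(\chi)}$, so $S(\chi)$ is real when $p\equiv1\bmod4$ and purely imaginary when $p\equiv3\bmod4$, and in both cases $|S(\chi)|^{2m}=\left(\frac{-1}{p}\right)^m S(\chi)^{2m}$. Applying Theorem \ref{mt1} with $r=2m$, the sign $\left(\frac{-1}{p}\right)^m$ exactly absorbs the sign appearing in $A_{2m}$, so uniformly in the residue of $p$ modulo $4$ one obtains $\sum_{\chi\bmod p}|S(\chi)|^{2m}=\frac12\binom{2m}{m}p^{m+1}+O(p^{m+1/2})$. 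Since $S(\chi_0)=O(1)$ is negligible and $\frac12\binom{2m}{m}=\binom{2m-1}{m}$, this already proves Conjecture \ref{C4}.

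To insert the weight I would expand $|L(1,\chi)|=\prod_{q\neq p}\left|1-\frac{\chi(q)}{q}\right|^{-1}$ via $(1-w)^{-1/2}=\sum_{j\ge0}\binom{2j}{j}(w/4)^j$, obtaining formally $|L(1,\chi)|=\sum_{N_1,N_2}\frac{h(N_1)h(N_2)}{N_1N_2}\chi(N_1)\overline\chi(N_2)$ with $h$ multiplicative and $h(q^j)=\binom{2j}{j}/4^j$. Writing $S(\chi)^{2m}=\sum_{a_1,\dots,a_{2m}}\prod_i\left(\frac{a_i^2-1}{p}\right)\chi(a_1\cdots a_{2m})$ and summing over $\chi\neq\chi_0$ via $\sum_{\chi\neq\chi_0}\chi(M)=(p-1)[M\equiv1]-1$, the diagonal $N_1=N_2$ contributes the Euler product $\sum_N h(N)^2/N^2=\prod_q\left(1+\sum_{j\ge1}\binom{2j}{j}^2/(4^{2j}q^{2j})\right)=C$ of \eqref{constant-c}, which multiplies the main term $\binom{2m-1}{m}p^{m+1}$ from the previous step and reproduces the predicted value $\binom{2m-1}{m}\,C\,p^{m+1}$.

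\emph{The main obstacle} is the analytic control of this last step. The Dirichlet expansion of $|L(1,\chi)|$ is only conditionally convergent, so the Euler product must be truncated at a suitable height and the tail discarded uniformly in $\chi$ using $|L(1,\chi)|\ll\log p$ and second-moment bounds; moreover the off-diagonal terms $N_1\neq N_2$, and in particular the ``false diagonals'' where $a_1\cdots a_{2m}\,N_1\equiv N_2\pmod p$ holds only as a congruence and not as an integer equality, must be shown to contribute $o(p^{m+1})$. Estimating the number of solutions of these congruences, combined with the $O(p^{m+1/2})$ error from Theorem \ref{mt1} and the logarithmic losses from the weight, is exactly the type of computation carried out for $m\le5$ in \cite{BLZ}; what now makes it uniform in $m$ is the identity $\left|\sum_{a}\chi(a+\overline a)\right|=|S(\chi)|$ together with the closed form supplied by Theorem \ref{mt1}.
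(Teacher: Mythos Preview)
The paper does not prove Conjecture~\ref{C5}. Immediately after stating it the authors write: ``Conjecture~\ref{C3} and Conjecture~\ref{C5} seem to be more difficult and are subject of our forthcoming work.'' There is therefore no proof in the paper to compare your proposal against.

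Your proposal does not prove it either. The first two steps are fine, and in fact your route to the unweighted Conjecture~\ref{C4} via the functional equation $S(\chi)=\left(\frac{-1}{p}\right)\overline{S(\chi)}$ and a direct appeal to Theorem~\ref{mt1} with $r=2m$ is cleaner than the paper's proof of Theorem~\ref{mt3}, which detours through Theorem~\ref{mt2} and two binomial identities. But the third step, where $|L(1,\chi)|$ is inserted, is only a plan: you yourself identify as ``the main obstacle'' that the Dirichlet expansion of $|L(1,\chi)|$ is only conditionally convergent, that the truncation and tail must be controlled uniformly in $\chi$, and that the off-diagonal congruences $a_1\cdots a_{2m}N_1\equiv N_2\pmod p$ must be shown to contribute $o(p^{m+1})$. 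None of these estimates is actually carried out. Saying that this is ``exactly the type of computation carried out for $m\le 5$ in \cite{BLZ}'' is not a proof for general $m$; those arguments in \cite{BLZ} are explicit case-by-case bounds, and the availability of Theorem~\ref{mt1} for the unweighted moments does not by itself control the weighted off-diagonal. This is precisely the gap that led the authors to defer Conjectures~\ref{C3} and~\ref{C5}, and your proposal leaves it open as well.
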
 
 In this article, we prove Conjecture \ref{C4}. In particular, we prove the following:
\begin{theorem}\label{mt3}
 Let $p$ be a prime large enough. Then for any positive integer $m$, we have the asymptotic formula
 \begin{align*}
 \sum_{\substack{\chi \bmod p\\ \chi\neq\chi_0}}\left|\sum_{a=1}^{p-1}\chi(a+\overline{a})\right|^{2m}=\binom{2m-1}{m}\cdot p^{m+1}+O(p^{m+1/2}).
 \end{align*}
 \end{theorem}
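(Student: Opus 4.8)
The plan is to reduce the weighted sum to the power moments of the Legendre‑twisted character sum that Theorem \ref{mt1} already evaluates. Write $T(\chi)=\sum_{a=1}^{p-1}\chi(a)\left(\frac{a^2-1}{p}\right)$, the quantity appearing in Theorem \ref{mt1}. First I would rewrite $S(\chi):=\sum_{a=1}^{p-1}\chi(a+\overline a)$ by counting, for each $t$, the number of $a\in(\mathbb Z/p)^\times$ with $a+\overline a=t$; since such an $a$ is a root of $X^2-tX+1$ (whose roots have product $1$, hence are nonzero), this count is $1+\left(\frac{t^2-4}{p}\right)$, so $S(\chi)=\sum_{t}\left(1+\left(\frac{t^2-4}{p}\right)\right)\chi(t)$. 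For $\chi\neq\chi_0$ the contribution $\sum_t\chi(t)$ vanishes, and the substitution $t=2s$ (using $\left(\frac{4}{p}\right)=1$) gives $S(\chi)=\chi(2)\,T(\chi)$. Hence $|S(\chi)|=|T(\chi)|$, and the sum in Theorem \ref{mt3} equals $\sum_{\chi\neq\chi_0}|T(\chi)|^{2m}$.

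Next I would convert these absolute moments into ordinary power moments so that Theorem \ref{mt1} applies. Replacing $a$ by $a^{-1}$ in $T(\chi)$ and using $\left(\frac{a^{-2}(1-a^2)}{p}\right)=\left(\frac{-1}{p}\right)\left(\frac{a^2-1}{p}\right)$ shows $T(\overline\chi)=\left(\frac{-1}{p}\right)T(\chi)$. Since $\overline{T(\chi)}=T(\overline\chi)$, this yields $|T(\chi)|^2=\left(\frac{-1}{p}\right)T(\chi)^2$ and therefore $|T(\chi)|^{2m}=\left(\frac{-1}{p}\right)^m T(\chi)^{2m}$. Summing and isolating the principal character,
\[
\sum_{\chi\neq\chi_0}|T(\chi)|^{2m}=\left(\frac{-1}{p}\right)^m\Big(\sum_{\chi\bmod p}T(\chi)^{2m}-T(\chi_0)^{2m}\Big),
\]
where $T(\chi_0)=\sum_{a=1}^{p-1}\left(\frac{a^2-1}{p}\right)=O(1)$ by the standard evaluation of a complete quadratic sum, so $T(\chi_0)^{2m}$ is absorbed into the error.

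Finally I would apply Theorem \ref{mt1} with $r=2m$, giving $\sum_{\chi\bmod p}T(\chi)^{2m}=A_{2m}\,p^{m+1}+O(p^{m+1/2})$. It then remains to check that $\left(\frac{-1}{p}\right)^m A_{2m}$ takes the same value in both residue classes: when $p\equiv1\bmod4$ it is $1\cdot\frac12\binom{2m}{m}$, and when $p\equiv3\bmod4$ it is $(-1)^m\cdot(-1)^m\frac12\binom{2m}{m}$, both equal to $\frac12\binom{2m}{m}=\binom{2m-1}{m}$. This produces the claimed main term with error $O(p^{m+1/2})$.

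The main obstacle is not analytic — the genuine input is Theorem \ref{mt1}, which is already established — but rather the bookkeeping of signs. One must track the quadratic twist carefully: the relation $T(\overline\chi)=\left(\frac{-1}{p}\right)T(\chi)$ makes $T(\chi)$ either real or purely imaginary according to $p\bmod4$, and it is precisely the interplay between the factor $\left(\frac{-1}{p}\right)^m$ and the case‑dependent constant $A_{2m}$ of Theorem \ref{mt1} that must cancel to yield a single uniform leading constant. Verifying this cancellation, together with the identity $\frac12\binom{2m}{m}=\binom{2m-1}{m}$, is the crux of turning the power‑moment formula into the absolute‑moment statement.
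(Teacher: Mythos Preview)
Your argument is correct and is genuinely more direct than the paper's. The paper also begins with $|S(\chi)|=|T(\chi)|$ (it quotes this as Lemma~\ref{X12} from \cite{BLZ}, whereas you rederive it by root counting), but from there it takes a detour through Theorem~\ref{mt2}: it uses Lemma~\ref{X2} to write, for even $\chi\neq\chi_0$,
\[
|T(\chi)|^{2m}=\frac{1}{|G(1;p)|^{2m}}\bigl(|G(1,\chi;p)|^2-2p\bigr)^{2m},
\]
expands binomially, applies Theorem~\ref{mt2} to each $\sum_\chi|G(1,\chi;p)|^{2k}$, and then must verify the combinatorial identity $\sum_{k=0}^{2m}(-1)^k2^{2m-k}\binom{2m}{k}\binom{2k}{k}=\binom{2m}{m}$ to collapse the main terms. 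Your observation that $\overline{T(\chi)}=T(\overline\chi)=\left(\frac{-1}{p}\right)T(\chi)$, hence $|T(\chi)|^{2m}=\left(\frac{-1}{p}\right)^{m}T(\chi)^{2m}$, lets you invoke Theorem~\ref{mt1} with $r=2m$ directly and bypasses both Theorem~\ref{mt2} and that binomial identity; the only bookkeeping left is the sign check $\left(\frac{-1}{p}\right)^{m}A_{2m}=\tfrac12\binom{2m}{m}$, which you carry out. The paper's route has the virtue of exhibiting the link between Theorems~\ref{mt2} and~\ref{mt3}, but yours is the shorter path from Theorem~\ref{mt1} to the result.
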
 

Conjecture \ref{C3} and Conjecture \ref{C5} seem to be more difficult and are subject of our forthcoming work.
 
\section{Proof of Theorem \ref{mt1}}
In this section we will use $\ell$-adic cohomology techniques to prove Theorem \ref{mt1}. Fix a prime $\ell\neq p$, and consider the category of $\ell$-adic constructible sheaves on the torus ${\mathbb G}_{m,\Fp}$ and its derived category $\Dbc({\mathbb G}_{m,\Fp},\Ql)$.

Denote by $\rho(t)$ the Legendre symbol $\left(\frac{t}{p}\right)$. First of all, note that we can write
\begin{align*}
S_r &:=\sum_{\chi\bmod p}\left(\sum_{a=1}^{p-1}\chi(a)\rho(a^2-1)\right)^r\\
&=\sum_{\chi\bmod p}\sum_{a_1,\ldots,a_r=1}^{p-1}\chi(a_1\cdots a_r)\rho(a_1^2-1)\cdots\rho(a_r^2-1)\\
&=(p-1)\sum_{\stackrel{a_1,\ldots,a_r=1}{\prod_{i=1}^r a_i=1}}^{p-1}\rho(a_1^2-1)\cdots\rho(a_r^2-1).
\end{align*}

Let $\LL_\rho$ be the Kummer sheaf on ${\mathbb G}_{m,\Fp}$ associated to $\rho$ \cite[1.4-1.7]{deligne} (extended by zero to the affine line ${\mathbb A}^1_{\Fp}$), and $\FF$ its pull-back via the map ${\mathbb A}^1_{\Fp}\to{\mathbb A}^1_{\Fp}$ given by $t\mapsto t^2-1$. For any $t\in\Fp$, a geometric Frobenius element at $t$ acts on the stalk of $\FF$ by multiplication by $\rho(t^2-1)$. Then our sum is $p-1$ times the trace of the action of a geometric Frobenius element at $t=1$ on the stalk of the $r$-fold (multiplicative) !-convolution object $\FF\ast\cdots\ast\FF\in\Dbc({\mathbb G}_{m,\Fp},\Ql)$, defined as ${\mathrm R}\mu_!(\FF\boxtimes\cdots\boxtimes\FF)$, where $\mu:{\mathbb G}_{m,\Fp}^r\to{\mathbb G}_{m,\Fp}$ is the multiplication map.

Let $P=\FF[1]\in\Dbc({\mathbb G}_{m,\Fp},\Ql)$ be the shifted object. Since $\FF$ does not have punctual sections, $P$ is a perverse sheaf \cite[2.3 (especially 2.3.6)]{katz-rls}. Moreover, $\FF$ is clearly not isomorphic to a Kummer sheaf, so $P$ has ``property $\mathcal P_!$'' in the terminology of \cite[2.6]{katz-rls} by \cite[Lemma 2.6.14]{katz-rls}. In particular, the $r$-fold convolution $P^{\ast r}:=P\ast\cdots\ast P=(\FF\ast\cdots\ast\FF)[r]$ is also a perverse sheaf and, as such, all its cohomology sheaves vanish except for the $-1$ and $0$-th ones, and the $0$-th one is punctual \cite[2.6.8]{katz-rls}. Therefore we get
$$
\frac{1}{p-1}S_r=\mathrm{Tr}(F_1|(\FF^{\ast r})_1)=(-1)^r\mathrm{Tr}(F_1|(\HH^r(\FF^{\ast r}))_1)+(-1)^{r-1}\mathrm{Tr}(F_1|(\HH^{r-1}(\FF^{\ast r}))_1).
$$

Since $\FF$ is pure of weight zero (all eigenvalues of the Frobenius actions on its stalks are roots of unity), $\HH^{r-1}(\FF^{\ast r})={\mathrm R}^{r-1}\mu_!(\FF\boxtimes\cdots\boxtimes\FF)$ is mixed of weights $\leq r-1$ by \cite[Theorem 3.3.1]{deligne-weil2}. Therefore, $|\mathrm{Tr}(F_1|(\HH^{r-1}(\FF^{\ast r}))_1)|\leq\dim(\HH^{r-1}(\FF^{\ast r})_1)\cdot p^{(r-1)/2}$.

\begin{lemma}
 We have the upper bound $\dim(\HH^{r-1}(\FF^{\ast r})_1)\leq r\cdot 2^{r-1}$.
\end{lemma}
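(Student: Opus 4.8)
The plan is to reduce the bound on the stalk at $t=1$ to a bound on the \emph{generic rank} of the sheaf $\HH^{r-1}(\FF^{\ast r})$, and then to compute that generic rank as an Euler characteristic. Recall from the discussion above that $\FF^{\ast r}=P^{\ast r}[-r]$ with $P^{\ast r}$ perverse, so that $\HH^{r-1}(\FF^{\ast r})=\HH^{-1}(P^{\ast r})$. Since $P^{\ast r}$ is a perverse sheaf on the curve ${\mathbb G}_m$, its cohomology sheaf in degree $-1$ has no punctual sections. Consequently, if $j\colon U\hookrightarrow{\mathbb G}_m$ denotes the dense open set on which $\HH^{-1}(P^{\ast r})$ is lisse, the adjunction map $\HH^{-1}(P^{\ast r})\to j_\ast j^\ast\HH^{-1}(P^{\ast r})$ is injective, and the stalk of $j_\ast$ at any point is the space of inertia invariants of the generic stalk. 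Hence
\[
\dim\left(\HH^{r-1}(\FF^{\ast r})\right)_1\le\dim\left(\HH^{r-1}(\FF^{\ast r})\right)_{\bar\eta},
\]
the generic rank, and it suffices to prove that this generic rank is $r\cdot 2^{r-1}$.

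Next I would compute the generic rank via the generic fibre of the multiplication map $\mu\colon{\mathbb G}_m^r\to{\mathbb G}_m$. For a general $c$, perversity of $P^{\ast r}$ forces $\mathrm{R}^i\mu_!(\FF\boxtimes\cdots\boxtimes\FF)_c$ to vanish for $i\ne r-1$ (the degree-$0$ part $\HH^0(P^{\ast r})$ is punctual, hence zero at a general point), so the cohomology of the fibre is concentrated in degree $r-1$ and the generic rank equals $(-1)^{r-1}\chi_c(\mu^{-1}(c),\FF\boxtimes\cdots\boxtimes\FF)$. The fibre $\mu^{-1}(c)\cong{\mathbb G}_m^{r-1}$ carries, as restriction of $\FF\boxtimes\cdots\boxtimes\FF$, the extension by zero of a rank-one \emph{tame} lisse sheaf $\mathcal{L}$ (a tensor product of pullbacks of the Kummer sheaf) supported on the open set $V_c=\{(a_1,\dots,a_{r-1})\colon a_i\ne\pm 1,\ \prod_i a_i\ne\pm c\}$. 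Because $\mathcal{L}$ is tame of rank one, its Euler characteristic equals that of the base, $\chi_c(V_c,\mathcal{L})=\chi_c(V_c)$.

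It then remains to compute $\chi_c(V_c)$. Writing $f(n)$ for the Euler characteristic of the analogous open set in ${\mathbb G}_m^{n}$ and projecting off the last coordinate, the locus $\{\prod_i a_i=\pm c\}$ inside $({\mathbb G}_m\setminus\{\pm1\})^{n}$ is isomorphic, for each of the two signs, to the corresponding set in one dimension lower; together with $\chi_c\big(({\mathbb G}_m\setminus\{\pm1\})^n\big)=(-2)^n$ this yields the recursion $f(n)=(-2)^n-2f(n-1)$ with $f(0)=1$. Solving gives $(-1)^n f(n)=(n+1)2^{n}$, so with $n=r-1$ the generic rank is $(-1)^{r-1}\chi_c(V_c)=r\cdot 2^{r-1}$, which combined with the first step proves the lemma.

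The main obstacle I expect is the second step: justifying that the generic-fibre cohomology is concentrated in a single degree and that the Euler characteristic of the tame sheaf $\mathcal{L}$ reduces to $\chi_c(V_c)$. The former uses perversity crucially, while the latter requires the higher-dimensional fact that a tame lisse sheaf has Euler characteristic equal to its rank times that of the space; I would establish this by fibering $V_c$ into curves and applying the Grothendieck--Ogg--Shafarevich formula with vanishing Swan conductors. Verifying carefully the geometric identifications underlying the recursion---in particular that the two divisors $\prod_i a_i=c$ and $\prod_i a_i=-c$ each contribute a lower-dimensional copy of the same type of open set---is the other point that needs attention.
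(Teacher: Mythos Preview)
Your argument is correct and reaches the same bound, but by a genuinely different route from the paper's. Both proofs begin identically: perversity of $P^{\ast r}$ forces $\HH^{r-1}(\FF^{\ast r})$ to have no punctual sections, so the stalk at $1$ is bounded by the generic rank. From there, the paper proceeds by induction on $r$, invoking a general inequality of Katz \cite[Theorem~28.2]{katz-ce} for the generic rank of a convolution of two perverse sheaves $A\ast B$, namely
\[
\mathrm{gen.rk.}\,\HH^{-1}(A\ast B)\le \chi_c(A)\cdot\mathrm{gen.rk.}\,\HH^{-1}(B)+\chi_c(B)\cdot\mathrm{gen.rk.}\,\HH^{-1}(A);
\]
feeding in $\chi_c(\FF[1])=2$ yields the recursion $g(r)\le 2^{r-1}+2g(r-1)$ and hence $g(r)\le r\cdot 2^{r-1}$. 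You instead compute the generic rank directly as the signed compactly supported Euler characteristic of the generic fibre, exploit tameness of the rank-one sheaf to reduce to $\chi_c(V_c)$, and evaluate the latter by an explicit geometric recursion. In effect you are unwinding, in this concrete situation, the mechanism behind Katz's theorem; the payoff is a self-contained argument that actually shows the generic rank \emph{equals} $r\cdot 2^{r-1}$, at the cost of having to justify the higher-dimensional identity $\chi_c(V_c,\mathcal L)=\chi_c(V_c)$ for tame rank-one sheaves (your proposed curve-fibering plus Grothendieck--Ogg--Shafarevich is the standard way to do this). The paper's version is shorter because it outsources that work to the citation.
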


\begin{proof}
 Since $\FF^{\ast r}[r]$ is perverse, $\HH^{r-1}(\FF^{\ast r})$ does not have punctual sections, so the dimension of its stalk at 1 is upper bounded by its generic rank.
 We will prove by induction that the generic rank of $\HH^{r-1}(\FF^{\ast r})$ is $\leq r\cdot 2^{r-1}$.
 
 For $r=1$ it is clear. Assuming it true for some $r-1$, we have
 \begin{align*}
 \mathrm{gen.rk.}\HH^{r-1}(\FF^{\ast r})
 &= \mathrm{gen.rk.}\HH^{-1}(\FF^{\ast r}[r])\\
 &= \mathrm{gen.rk.}\HH^{-1}(\FF^{\ast (r-1)}[r-1]\ast\FF[1])\\
 &\leq \chi_c(\FF^{\ast (r-1)}[r-1])\cdot\mathrm{gen.rk.}\HH^{-1}(\FF[1])\\&\qquad\qquad\qquad+\chi_c(\FF[1])\cdot\mathrm{gen.rk.}\HH^{-1}(\FF^{\ast (r-1)}[r-1])\\
 &=\chi_c(\FF[1])^{r-1}\cdot\mathrm{gen.rk.}\HH^{-1}(\FF[1])\\
 &\qquad\qquad\qquad+\chi_c(\FF[1])\cdot\mathrm{gen.rk.}\HH^{-1}(\FF^{\ast (r-1)}[r-1])\\
 &=2^{r-1}+2(r-1)\cdot 2^{r-2}\\
 &=r\cdot 2^{r-1},
 \end{align*}
where $\chi_c$ is the Euler characteristic (with compact supports), by \cite[Theorem 28.2]{katz-ce}.
\end{proof}

As a consequence, we get
$$
\left|\frac{1}{p-1}S_r-(-1)^r\mathrm{Tr}(F_1|(\HH^r(\FF^{\ast r}))_1)\right|=|\mathrm{Tr}(F_1|(\HH^{r-1}(\FF^{\ast r}))_1)|\leq r\cdot 2^{r-1}\cdot p^{(r-1)/2},
$$
so Theorem \ref{mt1} follows from the following
\begin{proposition}
 We have
 $$
 \mathrm{Tr}(F_1|(\HH^r(\FF^{\ast r}))_1)=\left\{\begin{array}{ll} 
 0, & \mbox{if $r$ is odd;} \\
 \left(\frac{-1}{p}\right)^{r/2}\frac{1}{2}{{r}\choose{r/2}} p^{r/2}, & \mbox{if $r$ is even.}
    \end{array}\right.
$$
\end{proposition}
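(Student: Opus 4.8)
The plan is to extract the weight-$r$ part of the stalk trace directly on the character-sum side. Recall from the reduction above that
$$\frac{1}{p-1}S_r=\mathrm{Tr}(F_1|(\FF^{\ast r})_1)=\frac{1}{p-1}\sum_{\chi\bmod p}T(\chi)^r,\qquad T(\chi):=\sum_{a=1}^{p-1}\chi(a)\rho(a^2-1),$$
and that the contribution of $\HH^{r-1}(\FF^{\ast r})$ is $O(p^{(r-1)/2})$; hence the Proposition is exactly the assertion that the weight-$r$ part of $\frac{1}{p-1}\sum_\chi T(\chi)^r$ equals the stated value. The first observation I would record is a parity vanishing: since $\rho(a^2-1)$ is invariant under $a\mapsto -a$, the substitution $a\mapsto -a$ gives $T(\chi)=\chi(-1)T(\chi)$, so $T(\chi)=0$ for every odd character. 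This is the source of the factor $\tfrac12$ in the final constant, as only the $(p-1)/2$ even characters survive.

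Next I would analyse $T(\chi)$ cohomologically for even $\chi$. For $\chi$ even and outside a finite exceptional set, the sheaf $\LL_\chi\otimes\FF$ on ${\mathbb G}_{m,\Fp}$ is a tame rank-one middle extension with nontrivial local monodromy at $0,\infty,1,-1$, so $H^0_c=H^2_c=0$ and $\dim H^1_c=2$ by the Euler--Poincar\'e formula; thus $T(\chi)=-(\lambda_1(\chi)+\lambda_2(\chi))$ with $\lambda_1,\lambda_2$ pure of weight $1$. The key input is the determinant computation $\lambda_1(\chi)\lambda_2(\chi)=\rho(-1)\,p$, constant in $\chi$; I would obtain this from the product formula for the determinant of $H^1_c$ of a tame sheaf in terms of local Gauss sums (the factors $g(\rho)^2=\rho(-1)p$ and $g(\chi)g(\bar\chi)=\chi(-1)p$ combine, and $\chi(-1)=1$ for even $\chi$), cross-checked against the exact identity $\frac{1}{p-1}S_2=\rho(-1)(p-3)$.

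Granting this, I expand $T(\chi)^r=\sum_{j=0}^r\binom{r}{j}\lambda_1^j\lambda_2^{r-j}$. When $r$ is even the middle term $\binom{r}{r/2}(\lambda_1\lambda_2)^{r/2}=\binom{r}{r/2}(\rho(-1)p)^{r/2}$ is independent of $\chi$; summing over the $(p-1)/2$ surviving even characters and dividing by $p-1$ produces exactly $\left(\tfrac{-1}{p}\right)^{r/2}\tfrac12\binom{r}{r/2}p^{r/2}$, the claimed value, with the finitely many exceptional characters absorbed into the error since $|T(\chi)|\le 2\sqrt p$ always.

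The main obstacle, and the step I would spend the most care on, is showing that every off-diagonal term $j\neq r/2$ contributes only to weight $\le r-1$ after division by $p-1$ (so that for odd $r$ the whole expression vanishes in weight $r$). Using $\lambda_1\lambda_2=\rho(-1)p$, such a term reduces to $(\rho(-1)p)^{\min(j,r-j)}\sum_\chi\lambda_{\max}(\chi)^{|2j-r|}$, and the needed saving amounts to genuine cancellation in the character sum of a nonzero power of a single Frobenius eigenvalue. I would make this rigorous by interpreting $\chi\mapsto T(\chi)$ as the trace function of Katz's Mellin-transform sheaf $\mathcal M$ on the dual torus, a middle extension of generic rank $2$ whose two eigensheaves have ratio of nonconstant monodromy; then $\sum_\chi T(\chi)^r$ is computed by Lefschetz on $\mathcal M^{\otimes r}$, the diagonal being the geometrically constant rank-one piece that contributes through $H^2_c$ in top weight $r+2$, while every off-diagonal piece is nonconstant and hence contributes only through $H^1_c$, that is to weight $\le r+1$, by Deligne's theorem. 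This is consistent with, and could alternatively be extracted from, the perverse bound already established for $\HH^{r-1}(\FF^{\ast r})$.
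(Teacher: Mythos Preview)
Your route is genuinely different from the paper's. The paper works entirely inside Katz's Tannakian category for multiplicative convolution: it twists $\FF[1]$ to a weight-zero self-dual object, shows its Tannakian group satisfies $G_{geom}=G_{arith}=\mathrm{O}_2(\CC)$ (using that the Tannakian determinant is $\delta_{-1}$ and that the object is not punctual), and then reads off $\dim\HH^r(\FF^{\ast r})_1=\dim(V^{\otimes r})^{\mathrm{O}_2}=\tfrac12\binom{r}{r/2}$ by a Haar integral; the exact Frobenius eigenvalue follows from $G_{geom}=G_{arith}$ after untwisting. Your idea is instead to exploit the pointwise identity $\lambda_1(\chi)\lambda_2(\chi)=\rho(-1)p$ to isolate the $j=r/2$ term of $(\lambda_1+\lambda_2)^r$ as the constant $\binom{r}{r/2}(\rho(-1)p)^{r/2}$. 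The parity vanishing and the determinant computation are correct and give the right candidate.

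The gap is the last paragraph. Your sentence ``the Proposition is exactly the assertion that the weight-$r$ part of $\tfrac{1}{p-1}S_r$ equals the stated value'' silently assumes that $\HH^r(\FF^{\ast r})_1$ is pure of weight $r$; this is true (negligible objects carry no punctual part, so the $\HH^0$ of $!$- and middle convolution agree, and middle convolution preserves purity), but you have not said so. More seriously, the proposed mechanism for the off-diagonal cancellation does not work as written. There are no global ``eigensheaves'' of your $\mathcal M$: the Tannakian group is $\mathrm{O}_2$, whose standard representation is irreducible, so $\mathcal M$ does not split and there is no coherent labeling of $\lambda_1,\lambda_2$ as $\chi$ varies. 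What does make sense is the decomposition of $\mathcal M^{\otimes r}$ into its $G_{geom}$-invariant part and the rest, but computing the dimension of the invariant part is precisely the $\mathrm{O}_2$-calculation the paper performs and you have bypassed. The fallback to the perverse bound on $\HH^{r-1}$ is not enough either: that bound controls the \emph{other} cohomology sheaf, not the possible lower-weight pieces of $\HH^r$, so it cannot by itself certify that every off-diagonal contribution lands outside $\HH^r$. A way to rescue your approach without identifying $G_{geom}$ is a genuine strong induction: for $j<r/2$ write $\lambda_1^j\lambda_2^{r-j}+\lambda_1^{r-j}\lambda_2^j=(\rho(-1)p)^j\,p_{r-2j}(\chi)$ and express each power sum $p_m$ as a degree-$m$ polynomial in $T(\chi)$ with coefficients in $\rho(-1)p$ via the recursion $p_m=(-T)p_{m-1}-\rho(-1)p\,p_{m-2}$; the off-diagonal sum then reduces to the $S_k$ with $k\le r-2$, and the induction closes. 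Carried out, this yields Theorem~\ref{mt1} directly; for the exact Proposition you would still need the purity remark above.
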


\begin{proof}
 As explained in \cite{katz-ce}, the category of all subquotients (in the category of perverse sheaves in ${\mathbb G}_{m,\Fp}$) of all the convolution powers of a semisimple perverse sheaf $\mathcal P$ pure of weight $0$ and its dual (modulo negligible objects, that is, objects which are succesive extensions of Kummer sheaves) is a finitely generated Tannakian neutral category with respect to the convolution opearation, where the dimension of an object is its Euler charactetistic, the identity object is the punctual sheaf $\delta_1[0]$ and the dual of an object $K$ is $\iota^\ast DK$, where $\iota$ is the inversion map $t\mapsto t^{-1}$ and $DK$ is the usual Verdier dual of $K$. It is therefore equivalent to the category of representations of a certain reductive algebraic group $G_{arith}\subseteq\mathrm{GL}_n(\Ql)\cong\mathrm{GL}_n(\mathbb C)$ with the tensor product operation, where $n=\chi_c({\mathbb G}_{m,\overline\Fp},\mathcal P)$. Similarly, for the pull-back of $\mathcal P$ to ${\mathbb G}_{m,\overline\Fp}$ we obtain a reductive group $G_{geom}\subseteq\mathrm{GL}_n(\mathbb C)$, which is a normal subgroup of $G_{arith}$ \cite[Theorem 6.1]{katz-ce}.
 
 We will apply this theory to a suitable twist of the object $\FF[1]$. First we fix a square root of $p$ so that we can do a half-Tate twist to get $\FF[1](1/2)$, which is pure of weight 0. Since $\rho$ is self-conjugate, $\FF[1](1/2)$ is its own Verdier dual, so its dual in the Tannakian sense is $\iota^\ast\FF[1](1/2)$. The trace of the action of a Frobenius element at a point $t\in {\mathbb F}_{p^n}^\ast$ (for ${\mathbb F}_{p^n}$ a finite extension of $\Fp$) on $\iota^\ast\FF$ is
 $$
 \rho\left(\mathrm{Nm}_{{\mathbb F}_{p^n}/\Fp}\left(\frac{1}{t^2}-1\right)\right)=\rho(-1)^n\rho(\mathrm{Nm}_{{\mathbb F}_{p^n}/\Fp}({t^2}-1)).
 $$
 By the Chebotarev density theorem, these traces determine a semisimple sheaf up to isomorphism, so we conclude that $\iota^\ast\FF$ is isomorphic to $(\rho(-1))^{deg}\otimes\FF$. If we take a square root $\epsilon$ of $\rho(-1)$ (so, for instance, $\epsilon=1$ for $p\equiv 1\bmod 4$ and $\epsilon=i$ for $p\equiv 3\bmod 4$), we deduce that $K:=\epsilon^{deg}\otimes\FF[1](1/2)$ is self-dual in the Tannakian sense.
 
 Therefore, either its $G_{arith}\subseteq \mathrm{O}_2(\mathbb C)$ or $G_{arith}\subseteq \mathrm{Sp}_2(\mathbb C)$. Now, by \cite[Lemma 23.1]{katz-ce}, the determinant of $G_{geom}$ corresponds to $\delta_{-1}[0]$ (note that, even though the paragraph before the lemma assumes $n\geq 3$, this fact is not used in the proof), which is a character of order $2$ (since $\delta_{-1}[0]\ast\delta_{-1}[0]=\delta_1[0]$). This rules out the case $G_{geom}\subseteq \mathrm{Sp}_2(\mathbb C)=\mathrm{SL}_2(\mathbb C)$. Moreover, since $\FF[1]$ is not punctual, $G_{geom}$ is not finite by \cite[Theorem 6.4]{katz-ce}. Therefore $G_{geom}$ is a positive-dimensional closed subgroup of the $1$-dimensional $\mathrm{O}_2(\CC)$, so it is either $\mathrm{O}_2(\CC)$ or $\mathrm{SO}_2(\CC)$. Using again that the determinant is geometrically non-trivial, we conclude that $G_{geom}=\mathrm{O}_2(\CC)$, which forces $G_{arith}$ to be $\mathrm{O}_2(\CC)$ too.
 
 Now $\HH^r(\FF^{\ast r})_1=\HH^0(\FF^{\ast r}[r])_1$ is the punctual component at $t=1$ of $\FF^{\ast r}[r]$. Looking at it from the other side of the Tannakian correspondence (geometrically for now), the punctual object $\delta_1$ corresponds to the trivial representation of $G_{geom}$, so the dimension of $\HH^r(\FF^{\ast r})_1$ is the dimension of the $G_{geom}$-invariant part of the representation $V^{\otimes r}$ of $G_{geom}=\mathrm{O}_2(\CC)$, where $V\cong\CC^2$ is the space on which the standard representation $\sigma$ of $G_{geom}$ (corresponding to $\FF[1]$) acts.
 
 The category of representations of $\mathrm{O}_2(\CC)$ as an algebraic group is equivalent to the category of representations of its maximal compact subgroup $\mathrm{O}_2(\mathbb R)$ as a compact Lie group. The latter can be parameterized by the disjoint union of the intervals $[0,2\pi)\sqcup [0,2\pi)$ via the maps
 $$
 \phi_+:[0,2\pi)\to\mathrm{SO}_2(\mathbb R);\;t\mapsto\begin{pmatrix} \cos(t) & -\sin(t) \\ \sin(t) & \cos(t)\end{pmatrix}
 $$
 $$
 \phi_-:[0,2\pi)\to\mathrm{O}^-_2(\mathbb R);\;t\mapsto\begin{pmatrix}\cos(t) & \sin(t) \\ \sin(t) & -\cos(t)\end{pmatrix}
$$
under which the Haar measure $\mu_{Haar}$ on $\mathrm{O}_2(\mathbb R)$ corresponds to the normalized Lebesgue measure $\frac{1}{4\pi}\mu_{Leb}$ on $[0,2\pi)\sqcup [0,2\pi)$. The dimension of the trivial part of $V^{\otimes r}$ is then
\begin{align*}
\int_{\mathrm{O}_2(\mathbb R)} \mathrm{Tr}(\sigma^{\otimes r}) d\mu_{Haar}&=
\int_{\mathrm{O}_2(\mathbb R)} \mathrm{Tr}(\sigma)^r d\mu_{Haar}\\
&=\frac{1}{4\pi}\int_0^{2\pi}(2\cos(t))^rdt+\frac{1}{4\pi}\int_0^{2\pi} 0^rdt\\
&=\frac{1}{4\pi}\int_0^{2\pi}(2\cos(t))^rdt,
\end{align*}
which is clearly $0$ for $r$ odd since $\cos(t+\pi)=-\cos(t)$. For $r$ even, integration by parts gives
$$
\int_0^{2\pi}(2\cos(t))^rdt=4\cdot\frac{r-1}{r}\int_0^{2\pi}(2\cos(t))^{r-2}dt
$$
and, by induction, we easily conclude that
$$
\int_{\mathrm{O}_2(\mathbb R)} \mathrm{Tr}(\sigma^{\otimes r}) d\mu_{Haar}=\frac{1}{2}{{r}\choose{r/2}}.
$$

Finally, since $G_{geom}=G_{arith}$, $G_{arith}$ acts trivially on the $G_{geom}$-invariant part. That is, it acts trivially on the punctual part $\HH^0(K^{\ast r})_1$ of $K^{\ast r}$ at $t=1$. Therefore, since $K^{\ast r}=\epsilon^{r\cdot deg}\otimes\FF^{\ast r}[r](r/2)$, it acts via multiplication by $\epsilon^r p^{r/2}=\rho(-1)^{r/2} p^{r/2}$ on the punctual part $\HH^0(\FF^{\ast r}[r]))_1$ and, in particular, the trace of the Frobenius action on $\HH^0(\FF^{\ast r}[r]))_1$ is 
$$
\rho(-1)^{r/2} p^{r/2}\cdot\frac{1}{2}{{r}\choose{r/2}}=\left(\frac{-1}{p}\right)^{r/2} p^{r/2}\cdot\frac{1}{2}{{r}\choose{r/2}}.$$
 \end{proof}

\section{Proofs of Theorem \ref{mt2} and Theorem \ref{mt3}}
For convenience, for the rest part of the article we denote $A=1+\chi(-1)$ and $B=\left(\frac{n}{p}\right)G(1;p)$,
where $G(1;p)$ is the Gauss sum $\displaystyle G(1;p)=\sum_{b=0}^{p-1}e\left(\frac{b^2}{p}\right)$ and $\left(\frac{\bullet}{p}\right)$ is the Legendre symbol. First we state a few lemmas which we will use in the proofs of Theorem \ref{mt2} and Theorem \ref{mt3}.
\begin{lemma}\cite[Lemma 2]{yuan}\label{X2}
 Let $p$ be an odd prime and $n$ be any integer with $\gcd(n,p)=1$. Then for any non-principal character  $\chi$ modulo $p$ the following identity holds
 \begin{equation*}
  |G(n,\chi;p)|^2=Ap+B\sum_{a=2}^{p-2}\chi(a)\left(\frac{a^2-1}{p}\right).
 \end{equation*}
 If $\chi_0$ is the principal character modulo $p$, then
 \begin{align*}
 |G(n,\chi_0;p)|^2=\begin{cases}
 p+1-2\sqrt{p}\left(\frac{n}{p}\right), &p\equiv 1\bmod 4;\\
 p+1, &p\equiv 3\bmod 4.
 \end{cases}
 \end{align*}
\end{lemma}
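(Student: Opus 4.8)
The plan is to reduce $|G(n,\chi;p)|^2$ to a single character sum by expanding the square and exploiting the multiplicative structure. First I would write
\[
|G(n,\chi;p)|^2 = G(n,\chi;p)\,\overline{G(n,\chi;p)} = \sum_{a=1}^{p-1}\sum_{b=1}^{p-1}\chi(a)\overline{\chi(b)}\,e\left(\frac{n(a^2-b^2)}{p}\right),
\]
where the sums run only over units since $\chi$ vanishes on multiples of $p$. The key manipulation is the substitution $a=bc$ for each fixed $b$: as $a$ runs over the nonzero residues so does $c=ab^{-1}$, and $\chi(a)\overline{\chi(b)}=\chi(c)$ while $a^2-b^2=b^2(c^2-1)$. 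This decouples the two variables and gives
\[
|G(n,\chi;p)|^2 = \sum_{c=1}^{p-1}\chi(c)\sum_{b=1}^{p-1} e\left(\frac{nb^2(c^2-1)}{p}\right).
\]

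Next I would evaluate the inner sum over $b$ according to whether $c^2-1$ vanishes. When $c\equiv\pm1$ the inner sum is simply $p-1$, contributing $(1+\chi(-1))(p-1)=A(p-1)$ after combining the two terms. When $c\not\equiv\pm1$ the inner sum is a genuine quadratic Gauss sum with argument $n(c^2-1)$ coprime to $p$; using $\sum_{b=0}^{p-1} e(mb^2/p)=\left(\frac{m}{p}\right)G(1;p)$ and subtracting the $b=0$ term, it equals $\left(\frac{n}{p}\right)\left(\frac{c^2-1}{p}\right)G(1;p)-1$. Pairing the factor $\chi(c)$ with the constant $-1$ over $c=2,\dots,p-2$ and invoking $\sum_{c=1}^{p-1}\chi(c)=0$ for non-principal $\chi$ produces exactly $+A$, which combines with $A(p-1)$ to give the leading term $Ap$; the surviving piece is $B\sum_{a=2}^{p-2}\chi(a)\left(\frac{a^2-1}{p}\right)$. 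This establishes the first identity.

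For the principal character the Gauss sum degenerates: $G(n,\chi_0;p)=\sum_{a=1}^{p-1} e(na^2/p)=\left(\frac{n}{p}\right)G(1;p)-1$. I would then take the modulus squared directly, invoking Gauss's classical evaluation $G(1;p)=\sqrt{p}$ for $p\equiv1\bmod4$ and $G(1;p)=i\sqrt{p}$ for $p\equiv3\bmod4$. In the first case the quantity is real and squares to $p+1-2\sqrt{p}\left(\frac{n}{p}\right)$; in the second its real and imaginary parts are $-1$ and $\left(\frac{n}{p}\right)\sqrt{p}$, whose squares sum to $p+1$.

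The only genuinely delicate point is the bookkeeping in the case split: one must verify that the $c\equiv\pm1$ contribution together with the $-1$ correction from the nondegenerate Gauss sums combine precisely into $Ap$, rather than $A(p-1)$ or $A(p+1)$. This hinges on using the vanishing of $\sum_{c}\chi(c)$ over the full range of units while the explicit character sum in the conclusion is restricted to $2\le a\le p-2$ (which is harmless, since $\left(\frac{a^2-1}{p}\right)=0$ at $a\equiv\pm1$ anyway). Everything else is the standard orthogonality-plus-Gauss-sum calculation.
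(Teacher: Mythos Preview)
Your argument is correct. The paper does not supply its own proof of this lemma: it is quoted verbatim from \cite[Lemma~2]{yuan}, so there is nothing in the present paper to compare against, and your expansion-plus-substitution $a=bc$ followed by the quadratic Gauss sum evaluation is exactly the standard derivation one finds in that reference.
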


\begin{lemma}\cite[Section 9.10]{gauss}\label{X11}
 For any integer $q\geq1$, we have
 \begin{equation*}
  G(1;q)=\frac{1}{2}\sqrt{q}(1+i)(1+e^{\frac{-\pi iq}{2}})=\begin{cases}\sqrt{q} &\ \text{if}\quad  q\equiv1\bmod 4;\\
0 &\ \text{if}\quad  q\equiv2\bmod 4;\\
i\sqrt{q} &\ \text{if}\quad  q\equiv3\bmod 4;\\
(1+i)\sqrt{q}  &\ \text{if}\quad

 q\equiv0\bmod 4.
\end{cases}
  \end{equation*}
\end{lemma}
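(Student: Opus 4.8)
The plan is to establish the classical evaluation of the quadratic Gauss sum $G(1;q)=\sum_{n=0}^{q-1}e(n^2/q)$ by contour integration (Mordell's method), since this route treats all four residue classes of $q$ uniformly and, unlike the reduction to prime powers via twisted multiplicativity, does not secretly presuppose quadratic reciprocity. Set $g(z)=e^{2\pi i z^2/q}$. The first step is to realize the finite sum as a contour integral: the kernel $1/(e^{2\pi i z}-1)$ has simple poles exactly at the integers, each of residue $1/(2\pi i)$, so integrating $h(z)=g(z)/(e^{2\pi i z}-1)$ around a positively oriented contour $C$ enclosing $0,1,\dots,q-1$ yields $\sum_{n=0}^{q-1}g(n)=G(1;q)$ by the residue theorem.

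For $C$ I would take two parallel lines in the direction $e^{i\pi/4}$ — one just to the left of $0$, the other its translate by $q$ lying just to the right of $q-1$ — joined at infinity. The point of the $\pi/4$ direction is that on $z=te^{i\pi/4}$ one has $z^2=it^2$, so $g$ becomes the decaying real Gaussian $e^{-2\pi t^2/q}$ and every integral converges. The key reduction exploits the quasi-periodicity $g(z+q)=g(z)\,e^{4\pi i z}$ together with the genuine $q$-periodicity of the denominator: subtracting the contributions of the two parallel lines collapses the entire contour integral to a single diagonal-line integral of $g(z)\,(1+e^{2\pi i z})$, because $(e^{4\pi i z}-1)/(e^{2\pi i z}-1)=1+e^{2\pi i z}$.

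It then remains to evaluate two Gaussian integrals along the diagonal line. The first, $\int g(z)\,dz$, equals $e^{i\pi/4}\sqrt{q/2}=\tfrac{1+i}{2}\sqrt q$. For the second, $\int g(z)\,e^{2\pi i z}\,dz$, completing the square in the exponent $z^2+qz$ pulls out the factor $e^{-\pi i q/2}$ and reduces the integral to the same Gaussian, giving $e^{-\pi i q/2}\cdot\tfrac{1+i}{2}\sqrt q$. Summing the two produces $G(1;q)=\tfrac12\sqrt q\,(1+i)\,(1+e^{-\pi i q/2})$, and the four stated cases drop out by evaluating $e^{-\pi i q/2}=i^{-q}$ modulo $4$.

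The genuinely easy half is the magnitude: the elementary substitution $m=n+k$ gives $|G(1;q)|^2=q$ for odd $q$, so all the content lies in the argument (the sign, equivalently the $(1+i)$ factor) — historically the point Gauss himself settled only after great effort. In the contour approach this phase appears automatically, and the step demanding real care is the analytic bookkeeping: justifying convergence and the shift of the diagonal lines through the origin, and fixing the orientation of $C$ so the overall sign comes out $+$. I expect that sign-and-convergence accounting, rather than any conceptual difficulty, to be the main obstacle; an alternative that sidesteps the contour is the theta-transformation/Poisson-summation derivation (the Landsberg--Schaar relation specialized at $q=1$), which I would keep in reserve as a cross-check.
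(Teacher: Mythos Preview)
Your proposal outlines Mordell's contour-integration proof correctly; the reduction via the quasi-periodicity $g(z+q)=g(z)e^{4\pi i z}$ and the collapse to two diagonal Gaussian integrals is the standard route, and your identification of the orientation/convergence bookkeeping as the only delicate point is accurate.

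However, the paper does not prove this lemma at all: it is simply quoted from Apostol's textbook \cite{gauss}, Section~9.10, as a classical result. So there is no ``paper's own proof'' to compare against. Your contour approach is in fact essentially the one Apostol presents in that section (he attributes it to Dirichlet, with the specific contour manipulation along the lines you describe), so if anything you have reproduced the cited source rather than diverged from it. The alternative you mention---Poisson summation via the theta transformation---is the other standard proof and would have been an equally valid choice; neither is needed for the purposes of this paper, where the lemma is used only as a black box to read off $|G(1;p)|=\sqrt{p}$ and the sign of $G(1;p)$ according to $p\bmod 4$.
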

\begin{lemma}\cite[Lemma 2.8]{BLZ}\label{X12}
Let $p$ be an odd prime and $\chi$ be any non-principal character modulo $p$. Then for any integer $t$ with $\gcd(t,p)=1$, we have the identity
\begin{align*}
\left|\sum_{a=1}^{p-1}\chi(ta+\overline{a})\right|=\left|\sum_{a=1}^{p-1}\chi(a)\left(\frac{a^2-t}{p}\right)\right|.
\end{align*}
\end{lemma}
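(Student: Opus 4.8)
The plan is to prove the identity by evaluating the left-hand sum combinatorially: I would fix the value $c=ta+\overline a$ and count how many $a\in\Fp^\ast$ produce each $c$, thereby converting the character sum over $a$ into a character sum over $c$ weighted by these multiplicities, and then match the latter to the right-hand side by a linear change of variable.

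First, for a fixed $c$, the equation $ta+\overline a=c$ is equivalent, after multiplying through by $a$, to the quadratic $ta^2-ca+1=0$ in the variable $a$. Since $\gcd(t,p)=1$ this is a genuine quadratic, and since its constant term is $1\neq 0$ no root can be $0$, so every root automatically lies in $\Fp^\ast$. The number of roots is therefore $1+\left(\frac{c^2-4t}{p}\right)$, by the standard count of solutions of a quadratic over $\Fp$ in terms of the Legendre symbol of its discriminant $c^2-4t$ (with the convention $\left(\frac 0p\right)=0$ handling the degenerate case where the discriminant vanishes). Grouping the left-hand sum by the value of $c$ then gives
\[
\sum_{a=1}^{p-1}\chi(ta+\overline a)=\sum_{c=0}^{p-1}\chi(c)\left(1+\left(\frac{c^2-4t}{p}\right)\right).
\]

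Next I would split this into two pieces. Because $\chi$ is non-principal we have $\sum_{c=0}^{p-1}\chi(c)=0$, so the contribution of the constant term $1$ vanishes, leaving
\[
\sum_{a=1}^{p-1}\chi(ta+\overline a)=\sum_{c=1}^{p-1}\chi(c)\left(\frac{c^2-4t}{p}\right).
\]
Finally, since $p$ is odd I can substitute $c=2b$, so that $b$ ranges over $\Fp^\ast$ exactly as $c$ does; then $c^2-4t=4(b^2-t)$, and using $\left(\frac 4p\right)=1$ together with $\chi(c)=\chi(2)\chi(b)$ yields
\[
\sum_{a=1}^{p-1}\chi(ta+\overline a)=\chi(2)\sum_{b=1}^{p-1}\chi(b)\left(\frac{b^2-t}{p}\right).
\]
Taking absolute values and using $|\chi(2)|=1$ gives precisely the claimed identity.

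There is no serious obstacle here, as the argument is elementary; the only points requiring care are verifying that the root count is exactly $1+\left(\frac{c^2-4t}{p}\right)$ in every case (including the vanishing-discriminant case and the value $c=0$, where $\chi(0)=0$ makes the term harmless), and confirming that no root of $ta^2-ca+1$ equals $0$, so that the multiplicity count over $a\in\Fp^\ast$ faithfully matches the original range of summation.
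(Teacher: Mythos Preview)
Your proof is correct. Note, however, that the present paper does not actually prove this lemma: it is quoted as \cite[Lemma 2.8]{BLZ} and used as a black box, so there is no ``paper's own proof'' to compare against here. Your argument---rewriting the left-hand side by fibering over the value $c=ta+\overline a$, counting the $a$'s via the discriminant of the quadratic $ta^2-ca+1=0$, killing the constant term by non-principality of $\chi$, and then rescaling $c\mapsto 2b$---is the standard elementary derivation of this identity, and all the edge cases you flag (vanishing discriminant, $c=0$, roots avoiding $0$) are handled correctly.
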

Now we give proofs of Theorem \ref{mt2} and Theorem \ref{mt3}.

\begin{proof}[Proof of Theorem \ref{mt2}]
\par For any $n$ with $\gcd(n,p)=1$, we have
 \begin{equation}\label{X20}
   \sum_{\chi\bmod p}|G(n,\chi;p)|^{2m}= \sum_{\substack{\chi\neq\chi_0}}|G(n,\chi;p)|^{2m}+|G(n,{\chi}_{0};p)|^{2m},
 \end{equation}
 where using Lemma \ref{X2}, we obtain
 \begin{align}\label{X21}
 |G(n,{\chi}_{0};p)|^{2m}=\begin{cases}
 \left( p+1-2\sqrt{p}\left(\frac{n}{p}\right)\right)^m, &p\equiv 1\bmod 4;\\
 (p+1)^m, &p\equiv 3\bmod 4.
 \end{cases}
 \end{align}
 Note that for any odd character $\chi\bmod p$ we have
 \begin{align*}
 G(n,\chi;p)=\sum_{a=1}^{p-1}\chi(a) e\left(\frac{na^2}{p}\right)=0.
 \end{align*}
Hence it follows from Lemma \ref{X2} that
\begin{align}
   \sum_{\chi\neq\chi_0}|G(n,\chi;p)|^{2m}&=
    \sum_{\substack{\chi\neq\chi_0\\ \chi(-1)=1}}|G(n,\chi;p)|^{2m}\notag\\
   &=\sum_{\substack{\chi\neq\chi_0\\ \chi(-1)=1}}\left(Ap+B\sum_{a=2}^{p-2}\chi(a)\left(\frac{a^2-1}{p}\right)\right)^m\notag\\
   &=\sum_{\substack{\chi\neq\chi_0\\ \chi(-1)=1}}\sum_{k=0}^{m}\binom{m}{k}(Ap)^{m-k}\left(B\sum_{a=2}^{p-2}\chi(a)\left(\frac{a^2-1}{p}\right)\right)^k\notag\\
   &=\sum_{k=0}^{m}p^{m-k}B^k\binom{m}{k}\notag\\&\qquad\qquad\sum_{\substack{\chi\neq\chi_0\\ \chi(-1)=1}}(1+\chi(-1))^{m-k}\left(\sum_{a=2}^{p-2}\chi(a)\left(\frac{a^2-1}{p}\right)\right)^k\notag\\
   &=\sum_{k=0}^{m}(2p)^{m-k}B^k\binom{m}{k}\sum_{\substack{\chi\neq\chi_0\\ \chi(-1)=1}}\left(\sum_{a=2}^{p-2}\chi(a)\left(\frac{a^2-1}{p}\right)\right)^k\notag\\
   &=\sum_{k=0}^{m}(2p)^{m-k}B^k\binom{m}{k}\sum_{\chi\neq\chi_0}\left(\sum_{a=2}^{p-2}\chi(a)\left(\frac{a^2-1}{p}\right)\right)^k\notag\\
   &=\sum_{k=0}^{m}(2p)^{m-k}B^k\binom{m}{k}\sum_{\chi\bmod p}\left(\sum_{a=2}^{p-2}\chi(a)\left(\frac{a^2-1}{p}\right)\right)^k\notag\\&\qquad\qquad\qquad\qquad\qquad\qquad\qquad\qquad+O(p^{m+1/2})\notag.
\end{align}
Using Theorem \ref{mt1} and Lemma \ref{X11}, we get
\begin{align}\label{th-2}
\sum_{\chi\neq\chi_0}|G(n,\chi;p)|^{2m}&=\sum_{k=0}^{m}(2p)^{m-k}B^k\binom{m}{k}\left(A_k.p^{(k+2)/2} + O(p^{(k+1)/2})\right)\notag\\
&\qquad\qquad\qquad\qquad\qquad\qquad\qquad\qquad+O(p^{m+1/2})\notag\\
&=\sum_{\substack{k=0\\ k~ \text{even}}}^{m}2^{m-k-1}\binom{m}{k}\binom{k}{k/2} p^{m+1}+ O(p^{m+1/2})\notag\\
&=\frac{1}{2}\cdot\sum_{\substack{k=0\\ k~ \text{even}}}^{m}2^{m-k}\binom{m}{k}\binom{k}{k/2} \cdot p^{m+1}+ O(p^{m+1/2}).
\end{align}
Now consider the polynomial $(1+x)^{2m}$ which can be rewritten as $(1+2x+x^2)^m$, where $(1+2x+x^2)^m$ has a binomial expansion
\begin{align}
(1+2x+x^2)^m&=\sum_{k=0}^{m}\binom{m}{k}(2x)^{m-k}(1+x^2)^{k}\notag\\
&=\sum_{k=0}^{m}\binom{m}{k}(2x)^{m-k}\sum_{r=0}^{k}\binom{k}{r}x^{2r}\notag\\
&=\sum_{k=0}^{m}\binom{m}{k}2^{m-k}\sum_{r=0}^{k}\binom{k}{r}x^{m-k+2r}\notag.
\end{align}

Hence comparing the coefficients of $x^m$, we get 
\begin{align*}
\sum_{\substack{k=0\\ k~ \text{even}}}^{m}2^{m-k}\binom{m}{k}\binom{k}{k/2}=\binom{2m}{m}.
\end{align*}
Thus from \eqref{th-2} we get
\begin{align*}
\sum_{\chi\neq\chi_0}|G(n,\chi;p)|^{2m}&=\frac{1}{2}\cdot\binom{2m}{m}\cdot p^{m+1} + O(p^{m+1/2})\\
&=\binom{2m-1}{m} \cdot p^{m+1}+ O(p^{m+1/2}).
\end{align*}
Hence combining \eqref{X20} and \eqref{X21}, we prove Theorem \ref{mt2}.
\end{proof}
\begin{proof}[Proof of Theorem \ref{mt3}]
From lemma \ref{X2}, Lemma \ref{X11} and Lemma \ref{X12} , with $t=1$, and then using Theorem \ref{mt2} we have
\begin{align}
 \sum_{\substack{\chi \bmod p\\ \chi\neq\chi_0}}\left|\sum_{a=1}^{p-1}\chi(a+\overline{a})\right|^{2m}&= \sum_{\substack{\chi \bmod p\\ \chi\neq\chi_0}}\left|\sum_{a=1}^{p-1}\chi(a)\left(\frac{a^2-1}{p}\right)\right|^{2m}\notag\\
 &=\frac{1}{|G(1;p)|^{2m}}\cdot \sum_{\substack{\chi(-1)=1\\ \chi\neq\chi_0}}\left(\left|\sum_{a=1}^{p-1}\chi(a) e\left(\frac{a^2}{p}\right)\right|^2-2p\right)^{2m}\notag\\
 &=\frac{1}{p^m}\cdot \sum_{k=0}^{2m}\binom{2m}{k}(-2p)^{2m-k}\sum_{\substack{\chi(-1)=1\\ \chi\neq\chi_0}}\left|\sum_{a=1}^{p-1}\chi(a) e\left(\frac{a^2}{p}\right)\right|^{2k}\notag\\
 &=p^{m+1}\cdot \sum_{k=0}^{2m}(-2)^{2m-k}\binom{2m}{k}\binom{2k-1}{k}+O(p^{m+1/2})\notag\\
 &=\frac{1}{2}\cdot p^{m+1}\cdot \sum_{k=0}^{2m}(-1)^{k}2^{2m-k}\binom{2m}{k}\binom{2k}{k}+O(p^{m+1/2})\notag.
\end{align}
Now notice that the polynomial $(1+x^2)^{2m}$ can be rewritten as $\left(2x+(1-x)^2\right)^{2m}$, where 
\begin{align*}
\left(2x+(1-x)^2\right)^{2m}&=\sum_{k=0}^{2m}\binom{2m}{k}(2x)^{2m-k}(1-x)^{2k}\\
&=\sum_{k=0}^{2m}\binom{2m}{k}(2x)^{2m-k}\sum_{r=0}^{2k}\binom{2k}{r}(-x)^{r}\\
&=\sum_{k=0}^{2m}\binom{2m}{k}2^{2m-k}\sum_{r=0}^{2k}\binom{2k}{r}(-1)^{r}x^{2m-k+r}.
\end{align*}
Hence comparing the coefficient of $x^{2m}$ we get
\begin{align*}
\binom{2m}{m}=\sum_{k=0}^{2m}(-1)^{k}2^{2m-k}\binom{2m}{k}\binom{2k}{k},
\end{align*}
which gives
\begin{align*}
\sum_{\substack{\chi \bmod p\\ \chi\neq\chi_0}}\left|\sum_{a=1}^{p-1}\chi(a+\overline{a})\right|^{2m}&=\frac{1}{2}\cdot  \binom{2m}{m}\cdot p^{m+1}+O(p^{m+1/2})\\
&= \binom{2m-1}{m}\cdot p^{m+1}+O(p^{m+1/2}).
\end{align*}
This completes the proof of Theorem \ref{mt3}.
\end{proof}

\section{Acknowledgement}
During the prepartion of this article N.B. was supported by the post-doctoral fellowship in Harish-Chandra Research Institute, Prayagraj, and A.R-L. was partially supported by grants MTM2016-75027-P (Ministerio de Economı\'{\i}a y
Competitividad and FEDER) and US-1262169 (Consejería de Econom\'{\i}a, Conocimiento, Empresas y Universidad de la Junta de Andalucía and FEDER).

We would like to thank N. Katz for his clarifications about some of the results in \cite{katz-ce}.

\end{document}